\tikzstyle{vertex}=[circle, draw, inner sep=0pt, minimum size=6pt]
\def\imod#1{\allowbreak\mkern10mu({\operator@font mod}\,\,#1)}
\theoremstyle{plain}
\newtheorem{theoremN}{Theorem}[section]
\newtheorem{corollaryN}[theoremN]{Corollary}
\newtheorem{lemmaN}[theoremN]{Lemma}
\theoremstyle{definition}
\theoremstyle{remark}
\newcommand{\tightoverset}[2]{%
  \mathop{#2}\limits^{\vbox to -.5ex{\kern-0.75ex\hbox{$#1$}\vss}}}
\begin{document}
\title[The Adjacency Spectra of Some Families of Minimally Connected Prime Graphs]{The Adjacency Spectra of Some Families of Minimally Connected Prime Graphs}

\author[Florez, Higgins, Huang, Keller, and Shen]{Chris Florez, Jonathan Higgins, Kyle Huang, Thomas Michael Keller, Dawei Shen}

\address{Chris Florez, Department of Mathematics, David Rittenhouse Lab, University of Pennsylvania, 209 South 33rd Street, Philadelphia, PA 19104-6395, USA}
\email{cflorez@sas.upenn.edu}

\address{Jonathan Higgins, Mathematics and Computer Science Department, Wheaton College, 501 College Ave, Wheaton, IL 60187, USA}
\email{jonathan.higgins@my.wheaton.edu}

\address{Kyle Huang, Mathematics Department, University of California-Berkeley, 2227 Piedmont Avenue, Berkeley, CA 94709, USA}
\email{kyle.huang@berkeley.edu}

\address{Thomas Michael Keller, Department of Mathematics, Texas State University, 601 University Drive, San Marcos, TX 78666-4616, USA}
\email{keller@txstate.edu}

\address{Dawei Shen, Department of Mathematics and Statistics, Washington University in St. Louis, 1 Brookings Dr., St. Louis, MO 63105, USA}
\email{shen.dawei@wustl.edu}

\subjclass[2010] {Primary: 15A18, Secondary: 05C25}
\keywords {prime graph, adjacency matrix, spectral graph theory}
\maketitle

\begin{abstract}
    In finite group theory, studying the prime graph of a group has been an important topic
for almost the past half-century. Recently, prime graphs of solvable groups have been
characterized in graph theoretical terms only. This now allows the study of these 
graphs without any knowledge of the group theoretical background. In this paper we 
study prime graphs from a linear algebra angle and focus on the class of
minimally connected prime graphs introduced in earlier work on the subject. 
As our main results, we determine the 
determinants of the adjacency matrices and the spectra of some
important families of these graphs.

\end{abstract}

\begin{section}{Introduction}
This paper deals with prime graphs of finite solvable
groups. The prime graph of a finite group is the graph whose vertices are the prime numbers
dividing the order of the group, and two vertices are linked by an edge if and only if their 
product divides the order of some element of the group. Prime graphs were introduced by
Gruenberg and Kegel in the 1970s and have been an object of continuous study since then.
They were one of the first graphs assigned to groups. This idea of representing group theoretical
data via graphs and describing them via graph theoretical notions proved so successful 
that today there is a myriad of graphs (e.g. character degree graphs, conjugacy class size
graphs, etc.) and a whole industry of exploring them. For this reason, today prime graphs are
often referred to as Gruenberg-Kegel graphs.\\

While a focus in the study of prime graphs has been on simple groups for a long time,
the main result of \cite{2015_REU_Paper}, somewhat surprisingly, is a purely graph theoretical
characterization of prime graphs of solvable groups: A (simple) graph is the prime graph
of a finite solvable group if and only if its complement is triangle-free and 3-colorable.
This made it possible to study
simple groups whose prime graph is that of a finite solvable group, see 
\cite{Maslova_almost_simple}. Moreover, this characterization 
allowed the authors of \cite{2015_REU_Paper}, for solvable groups, to introduce and study 
the idea of
minimal prime graphs: connected graphs whose complement is triangle-free and 3-colorable,
but removing an edge means that the complement has a triangle or is no longer 3-colorable.
The groups whose prime graphs are minimal are groups which are "saturated in Frobenius actions"
and have a restricted, but highly non-trivial structure, as discussed in detail in 
\cite{2015_REU_Paper}. In \cite{mcpg_paper}
the authors thoroughly explore some graph theoretical properties of minimal prime graphs.
Also, an alternative notion of minimal prime graphs - minimally connected prime graphs -
is introduced and turns out to be closely related to minimal prime graphs.\\

In this paper, we study minimal prime graphs from a completely different angle, namely
 from a linear algebra point of view. This is one of the
first studies of graphs related to groups with a linear 
algebra focus (the only other group related graphs for which this has been done and that we are aware of being Cayley graphs). We will find a rich structure  
for some of the basic minimal and minimally connected prime graphs. We will study the 
determinants and sets 
of eigenvalues of the adjacency matrices (spectra) of such graphs and obtain some detailed
information. \\

We now explain the specific content of the paper in some more technical detail.\\

A minimal prime graph of a solvable group $\Gamma_G$ is defined as a connected graph of order $n>1$ such that $\Gamma_G \setminus \{pq\}$ is not the prime graph of a solvable group for any $pq \in E(\Gamma_G)$ (\cite{2015_REU_Paper}). Equivalently, a minimal prime graph is a connected graph of order $n>1$ whose complement is triangle-free and three-colorable, but the addition of an edge to its complement induces a triangle or renders it no longer three-colorable. A minimally connected prime graph of a solvable group is defined similarly, but it includes that the removal of any edge may result in a disconnected graph without inducing a triangle or changing the colorability of the complement (\cite{mcpg_paper}). If $\mathcal{G}$ is the class of minimal prime graphs and $\hat{\mathcal{G}}$ is the class of minimally connected prime graphs, it has been shown that $\mathcal{G} \subsetneq \hat{\mathcal{G}}$ and if $\Gamma \in \hat{\mathcal{G}}\setminus\mathcal{G}$, the $\Gamma$ is the graph of two vertex-disjoint complete graphs joined by one edge. These graphs are called complete bridge graphs, and we use the notation $B_{m,n}$ to denote the complete bridge graph of complete graphs $K_m$ and $K_n$ joined by one edge. For convenience, we say $m \geq n$, and in order to maintain the conditions in the definition of minimally connected prime graphs, we must enforce the conditions $m \geq n >1$, $m=2$ and $n=1$, or $m=n=1$ (\cite{mcpg_paper}). Another kind of minimally connected prime graph (which is contained in $\mathcal{G}$) that we will consider in this paper is reseminant graphs. Reseminant graphs are defined as graphs generated by repeated vertex duplication on $C_5$. Vertex duplication is a method of generating new graphs from old graphs, where if we have a graph $G$, we can produce the graph $G'$ by introducing a new vertex $v$ to $G$ and an edge $vv'$, where $v' \in V(G)$, along with the edges $vx$ if and only if $xv' \in E(G)$. In this paper, we are not concerned with the group theoretic problems, so when we say "minimally connected prime graphs," we are referring to minimally connected prime graphs of solvable groups. \bigskip

In Section 2, we will find formulae for the determinants of the adjacency matrices of complete bridge graphs, $B_{m,n}$, and suspension graphs $S(m,n)$. The suspension graph $S(m,n)$ is the minimal prime graph generated from $B_{m,n}$ by connecting a new vertex to every vertex in $B_{m,n}$ except for the two bridge vertices (\cite{mcpg_paper}). We use these results to determine the characteristic polynomials and the adjacency spectra of two particular families of minimally connected prime graphs in Sections 3 and 4. The first of these is the family of complete bridge graphs of the form $B_{m,m-1}$ where $m>2$, and the second is the family of graphs $\tilde{R}_n$, which denotes the unique reseminant graph of $n+5$ vertices acquired by repeatedly duplicating the same vertex in the induced 5-cycle. \bigskip

\begin{figure}
\label{dope graphs}
\centering
\scalebox{1.0}{
\begin{tikzpicture}

  [scale=.1,auto=left,every node/.style={circle,fill=black!20}]
  \node (n6) at (2,8.5) {};
  \node (n4) at (3,7.5)  {};
  \node (n5) at (4,7.5)  {};
  \node (n1) at (5,8.5) {};
  \node (n2) at (5,6.5)  {};
  \node (n3) at (2,6.5)  {};
  \node (n7) at (1,7.5) {};
  \node (n8) at (7,8.5) {};
  \node (n9) at (8,7.5)  {};
  \node (n10) at (9,7.5)  {};
  \node (n11) at (10,8.5) {};
  \node (n12) at (10,6.5)  {};
  \node (n13) at (7,6.5)  {};
  \node (n14) at (6,7.5) {};
  \node (n15) at (8.5,9) {};
  \node (n16) at (11.5,8.1) {};
  \node (n17) at (11.5,6.9) {};
  \node (n18) at (12.65,8.65) {};
  \node (n19) at (12.65,6.35) {};
  \node (n20) at (13.65,7.5) {};
  \node (n21) at (14.65,7.5) {};
  
\draw [fill=black] (2,8.5) circle (3.5pt);
\draw [fill=black] (3,7.5) circle (3.5pt);
\draw [fill=black] (4,7.5) circle (3.5pt);
\draw [fill=black] (5,8.5) circle (3.5pt);
\draw [fill=black] (5,6.5) circle (3.5pt);
\draw [fill=black] (2,6.5) circle (3.5pt);
\draw [fill=black] (1,7.5) circle (3.5pt);
\draw [fill=black] (7,8.5) circle (3.5pt);
\draw [fill=black] (8,7.5) circle (3.5pt);
\draw [fill=black] (9,7.5) circle (3.5pt);
\draw [fill=black] (10,8.5) circle (3.5pt);
\draw [fill=black] (10,6.5) circle (3.5pt);
\draw [fill=black] (7,6.5) circle (3.5pt);
\draw [fill=black] (6,7.5) circle (3.5pt);
\draw [fill=black] (8.5,9) circle (3.5pt);
\draw [fill=black] (11.5,8.1) circle (3.5pt);
\draw [fill=black] (11.5,6.9) circle (3.5pt);
\draw [fill=black] (12.65,8.65) circle (3.5pt);
\draw [fill=black] (12.65,6.35) circle (3.5 pt);
\draw [fill=black] (13.65,7.5) circle (3.5pt);
\draw [fill=black] (14.65,7.5) circle (3.5pt);

\node (n22) at (3.5,5.9) {\textbf{(i)}};
\node (n23) at (8.5,5.9) {\textbf{(ii)}};
\node (n23) at (12.75,5.9) {\textbf{(iii)}};

  \foreach \from/\to in {n6/n4,n4/n5,n5/n1,n1/n2,n2/n5,n6/n3,n3/n4,n7/n6,n7/n4,n7/n3,n14/n8,n14/n13,n14/n9,n9/n8,n9/n13,n8/n13,n9/n10,n10/n11,n10/n12,n11/n12,n15/n8,n15/n11,n15/n12,n15/n13,n15/n14,n16/n17,n16/n18,n17/n19,n18/n20,n19/n20,n18/n21,n19/n21,n20/n21}
    \draw (\from) -- (\to);
\end{tikzpicture}}

\caption{Here are examples of some of the types of graphs that we will consider in this paper. Graph (i) is the complete bridge graph $B_{4,3}$, graph (ii) is the suspension graph $S(4,3)$, and graph (iii) is the unique reseminant graph of 6 vertices.}
\end{figure}

Before proceeding, it will be helpful to define some additional notation that will be used. If we have a graph $\Gamma$, we will let $A(\Gamma)$ be the adjacency matrix of $\Gamma$. Also, we will let $I_n$ be the $n \times n$ identity matrix, $J_n$ be the $n \times n$ matrix of ones, and $(x)_n$ be the $n \times n$ matrix of x's, where $x \in \mathbb{R}$ (although $x$ will only be rational in this paper). We will also let $\hat{\mathcal{G}}$ denote the set of minimally connected prime graphs, $\mathcal{G}$ the minimal prime graphs, $\mathcal{R}$ the reseminant graphs, and $\tilde{\mathcal{R}}$ the reseminant graphs generated by repeatedly duplicating the same vertex.  
\end{section}

\begin{section}{Determinants of Adjacency Matrices for Some Minimally Connected Prime Graphs}

We will begin by considering $A(B_{m,n})$. It is easy to visualize the adjacency matrix of a complete bridge graph:

\begin{equation}
\label{adj matrix for complete bridge graphs}
A(B_{m,n}) = \left(\begin{array}{@{}c|c@{}}
  A(K_m)
  & \begin{matrix}
  0 & 0& \cdots &0\\
  \vdots & \vdots & \ddots & \vdots\\
  1 & 0 & \cdots & 0
  
  \end{matrix}\\
\hline
  \begin{matrix}
  0 & 0& \cdots &1\\
  \vdots & \vdots & \ddots & \vdots\\
  0 & 0 & \cdots & 0 \end{matrix} &
  A(K_n)
  
\end{array}\right),
\end{equation}

In the following theorem, we find the determinant of such a matrix.

\begin{theoremN}
The determinant of the adjacency matrix of a complete bridge graph is
\begin{equation}
\label{det of adj matrix for complete bridge graphs}
\det{ A(B_{m,n})} = (-1)^{m+n-1}(3-(m+n)).
\end{equation}
\end{theoremN}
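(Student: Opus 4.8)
The plan is to compute $\det A(B_{m,n})$ directly by row/column operations exploiting the near-complete structure of each block. Recall that $A(K_m) = J_m - I_m$, so the adjacency matrix of $B_{m,n}$ is a $(m+n)\times(m+n)$ matrix built from $J_m - I_m$ and $J_n - I_n$ on the diagonal blocks, with a single $1$ in the off-diagonal blocks (say at position $(m,m+1)$ and its transpose), corresponding to the bridge edge joining the last vertex of $K_m$ to the first vertex of $K_n$. Order the vertices so that $v_m$ (the bridge endpoint in $K_m$) is last among the first block and $v_{m+1}$ (the bridge endpoint in $K_n$) is first among the second block.

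**Key steps.**

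First I would use the fact that within each complete-graph block all non-bridge vertices play symmetric roles. Concretely, subtract row $1$ from rows $2,\dots,m-1$ and row $m+2$ from rows $m+3,\dots,m+n$; this introduces many $\pm 1$ entries and a lot of zeros, because two rows of $J-I$ differing only in their diagonal placement cancel everywhere except in two spots. After these operations the matrix becomes sparse enough that one can do a cofactor expansion, or continue with column operations to clear out the differenced rows. The effect is to reduce the determinant to that of a small matrix (of bounded size, independent of $m$ and $n$) whose entries are linear in $m$ and $n$, together with a sign/scaling factor of the form $(\pm 1)^{m-2}(\pm 1)^{n-2}$ coming from the elementary operations and the triangular part that gets peeled off. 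Expanding that small determinant yields a polynomial in $m+n$; matching it against small cases (e.g. $B_{2,1}$, $B_{2,2}$, $B_{3,2}$, which can be checked by hand) pins down that it equals $(-1)^{m+n-1}(3-(m+n))$.

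An alternative I would keep in reserve is to treat $A(B_{m,n}) = D + uv^\top + vu^\top$ type structure is not quite available, but $A(B_{m,n}) + I_{m+n}$ is \emph{almost} block-diagonal with blocks $J_m$ and $J_n$ (each of rank one) plus a rank-two perturbation from the bridge; one can then apply the matrix determinant lemma / Schur complement repeatedly. Writing $A(B_{m,n}) = \left(\begin{smallmatrix} J_m - I_m & E \\ E^\top & J_n - I_n \end{smallmatrix}\right)$ with $E = e_m e_1^\top$, and using the Schur complement with respect to the $(J_n - I_n)$ block (whose determinant and inverse are classical: $\det(J_n - I_n) = (-1)^{n-1}(n-1)$, and $(J_n-I_n)^{-1} = \frac{1}{n-1}J_n - I_n$), reduces the problem to the determinant of $J_m - I_m - E(J_n-I_n)^{-1}E^\top$, which is $J_m - I_m$ with a single diagonal entry perturbed by a scalar. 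That last determinant is again a rank-one-plus-diagonal computation.

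**Main obstacle.**

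The computation itself is routine linear algebra; the real care is in bookkeeping the signs and the edge cases. The formula must hold for the degenerate allowed parameters $m=2,n=1$ and $m=n=1$ as well as for $m\ge n>1$, and the Schur-complement route formally requires $n-1 \ne 0$, so the case $n=1$ (where $J_1 - I_1 = 0$ is singular) has to be handled separately by a direct $3\times 3$ (for $B_{2,1}$) or $2\times 2$ (for $B_{1,1}$) computation. So the plan is: (1) do the generic case $n\ge 2$ via Schur complement, carefully tracking that $\det A(B_{m,n}) = \det(J_n-I_n)\cdot\det\!\bigl(J_m - I_m - \tfrac{1}{n-1}e_m e_m^\top\bigr)$ and evaluating the second factor as a rank-one update of $-I_m + J_m$; (2) simplify to $(-1)^{m+n-1}(3-(m+n))$; (3) verify the small exceptional cases $n=1$ by hand. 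The expected friction point is getting the second determinant right: $J_m - I_m - \tfrac{1}{n-1}e_m e_m^\top$ is $J_m$ minus a diagonal matrix that is $I_m$ except in the last slot where it is $1 + \tfrac{1}{n-1} = \tfrac{n}{n-1}$, so its determinant is a two-term rank-one expansion that must be combined cleanly with the $(-1)^{n-1}(n-1)$ prefactor so the $(n-1)$'s cancel and leave the stated linear expression.
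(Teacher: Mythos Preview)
Your Schur-complement route is exactly the paper's approach: reduce via $\det M = \det(A - BD^{-1}C)\det D$ with $D = A(K_n)$, obtaining $A(K_m)$ with a single diagonal entry perturbed. The paper then iterates the Schur complement once more (with respect to the $A(K_{m-1})$ subblock) to collapse to a $1\times 1$ determinant, whereas you propose the matrix determinant lemma for that second step; either finishes the job.

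One slip to correct in your bookkeeping: $E(J_n - I_n)^{-1}E^\top$ picks out the $(1,1)$ entry of $(J_n - I_n)^{-1} = \frac{1}{n-1}J_n - I_n$, which is $\frac{1}{n-1} - 1 = -\frac{n-2}{n-1}$, not $\frac{1}{n-1}$. Hence the perturbed diagonal entry of the Schur complement is $0 - \bigl(-\frac{n-2}{n-1}\bigr) = \frac{n-2}{n-1}$, matching the paper, rather than $-\frac{1}{n-1}$ as your last paragraph suggests. Your caution about the degenerate case $n=1$ is well placed; the paper's argument silently assumes $A(K_n)$ is invertible (and in fact needs $m\ge 3$ for the second Schur step), so those small cases do need the separate hand check you outline.
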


\begin{proof}
It is a known result that if we have a block matrix
\[
M = \left(\begin{array}{@{}c|c@{}}
 A
  & B\\
\hline
  C &
  D
\end{array}\right),
\]

where $A$ is a $m \times m $ matrix, $B$ is a $m \times n$ matrix, $C$ is a $n \times m$ matrix, and $D$ is an invertible $n \times n$ matrix, then 
\[
\det{M} = \det{A-BD^{-1}C}\det{D}.
\]
The determinants for the adjacency matrices of complete graphs are well known. We know $\det{A(K_n)}=(-1)^{n-1}(n-1)$ (for example, see \cite{det_adj_matrices}). Given our equation above, this is equivalent to $\det{D}$ for what we are trying to prove. Now, we need to find $\det{A-BD^{-1}C}.$
\\
\\
Now $D = A(K_n)$, and so 
\[
D^{-1} = \left(\frac{1}{n-1}\right)_n - I_n.
\]
By Equation \ref{adj matrix for complete bridge graphs}, we see that we can let $B$ be the upper right sub-matrix, and we can let $C$ be the lower left sub-matrix. When we do so, we find
\[
BD^{-1}C = \left(\begin{matrix} -(\frac{n-2}{n-1}) & 0 & \cdots & 0 \\
0 & 0 & \cdots & 0\\
\vdots & \vdots & \ddots & \vdots\\
0 & 0 & \cdots & 0 \end{matrix}\right),
\]
from which we see
\[
A - BD^{-1}C = \left(\begin{array}{@{}c|c@{}}
  \frac{n-2}{n-1}
  & \begin{matrix}
  1 & 1& \cdots &1\\
  \end{matrix}\\
\hline
  \begin{matrix}
  1\\
  1\\ \vdots \\ 1
   \end{matrix} &
  A(K_{m-1})
  
\end{array}\right),
\]
which we know is a $m\times m$ matrix.
We will find the determinant of this matrix by also treating it as a block matrix. Let $A^*$ be the singleton matrix $\left(\frac{n-2}{n-1}\right)$, let $B^*$ and $C^*$ be the vectors of 1's on the top right and bottom left, and let $D^*= A(K_{m-1})$. Thus,  $\det{A - BD^{-1}C} = \det{A^*-B^*(D^*)^{-1}C^*}\det{
D^*}$. Similar to before, we find 
\[
(D^*)^{-1} = \left(\frac{1}{m-2}\right)_{m-1} - I_{m-1},
\]
and when we multiply this on the left and right by the 1-vectors $B^*$ and $C^*$, we are left with the singleton matrix $\left(\frac{m-1}{m-2}\right)$, from which we observe
\[
A^*-B^*(D^*)^{-1}C^* = \left(\frac{n-2}{n-1} - \frac{m-1}{m-2}\right),
\]
so 
\[
\det{A-BD^{-1}C} = (-1)^{m-2}(m-2)\, \left(\frac{n-2}{n-1} - \frac{m-1}{m-2}\right).
\]
From this, we can conclude 
\[
\det{A(\Gamma)} = (-1)^{m+n-3}(m-2)(n-1)\, \left(\frac{n-2}{n-1} - \frac{m-1}{m-2}\right) = (-1)^{(m+n)-1}(3-(m+n)),
\]
so the theorem follows.
\end{proof}

From this theorem, we also find the following corollary.

\begin{corollaryN}
$\det{A(B_{m,n})} = \det{A(B_{m',n'})}$ if and only if $m+n = m'+n'$.
\end{corollaryN}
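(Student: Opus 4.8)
The plan is to reduce everything to the closed form established in the theorem, namely $\det A(B_{m,n}) = (-1)^{m+n-1}(3-(m+n))$, which already exhibits $\det A(B_{m,n})$ as a function of the single integer $s := m+n$. Writing $f(s) := (-1)^{s-1}(3-s)$, the ``if'' direction is immediate: if $m+n = m'+n'$ then $s = s'$, so $\det A(B_{m,n})$ and $\det A(B_{m',n'})$ are literally the same number. Thus the whole content of the corollary is the ``only if'' direction, which is precisely the assertion that $f$ is injective on the set of admissible values of $s$.

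First I would record which values $s$ can take. Since $B_{m,n}$ is a minimally connected prime graph, the admissible pairs are $m \ge n > 1$, or $(m,n) = (2,1)$, or $(m,n) = (1,1)$; in every case $s = m+n \ge 2$, and conversely each integer $s \ge 2$ occurs. Hence it suffices to prove that $f$ is injective on $\{s \in \mathbb{Z} : s \ge 2\}$.

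Next I would split on the parity of $s$. If $s$ is even then $(-1)^{s-1} = -1$, so $f(s) = s-3$, an odd integer; if $s$ is odd then $(-1)^{s-1} = 1$, so $f(s) = 3-s$, an even integer. Consequently the parity of $f(s)$ is opposite to that of $s$, so $f(s) = f(s')$ forces $s \equiv s' \pmod 2$. Within a fixed parity class $f$ is an affine function of $s$ with slope $\pm 1$, hence strictly monotonic and in particular injective; so $f(s) = f(s')$ together with $s \equiv s' \pmod 2$ gives $s = s'$. Combining the two parity cases yields injectivity of $f$, which is exactly the ``only if'' direction, and the corollary follows.

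I do not expect a genuine obstacle here: once the theorem is available, the corollary is a short parity-and-monotonicity observation. The only point worth stating explicitly is that the sign factor $(-1)^{s-1}$ does not interfere with injectivity --- a priori two distinct values of $s$ of the same parity could give determinants of equal absolute value and equal sign --- but the affine form $f(s) = \pm(3-s)$ on each parity class rules this out at once.
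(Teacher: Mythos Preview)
Your proof is correct and follows the same approach as the paper: both reduce the statement to injectivity of the function $f(s) = (-1)^{s-1}(3-s)$ on the admissible values of $s = m+n$. The paper simply asserts that $f$ is injective without justification, whereas you actually supply the parity-and-monotonicity argument that makes this rigorous, so your write-up is, if anything, more complete.
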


\begin{proof}
The reverse direction is obvious. If $m+n = m'+n'$, then $(-1)^{(m+n)-1}(3-(m+n)) = (-1)^{(m'+n')-1}(3-(m'+n'))$, so it clearly follows that $\det{A(B_{m,n})} = \det{A(B_{m',n'})}$.
\\
\\
The equation in Theorem \ref{det of adj matrix for complete bridge graphs} can be thought of as a function of $m+n$. The forward direction follows from its being injective. 
\end{proof}

\begin{theoremN}
If $m+n>4$ or $m+n<4$, $\det{A(\overline{B_{m,n}})} = 0$. However, $\det{A(\overline{B_{2,2}})}=1$.
\end{theoremN}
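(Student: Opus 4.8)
The plan is to first make the complement concrete. Label the vertices of $B_{m,n}$ as $u_1,\dots,u_m$ (spanning the $K_m$) and $w_1,\dots,w_n$ (spanning the $K_n$), with bridge edge $u_mw_1$, exactly as in Equation~\ref{adj matrix for complete bridge graphs}. In $\overline{B_{m,n}}$ there are no edges among the $u_i$ and none among the $w_j$, while $u_i$ is adjacent to $w_j$ precisely for the pairs $(i,j)$ with $(i,j)\neq(m,1)$. Hence $\overline{B_{m,n}}$ is the complete bipartite graph $K_{m,n}$ with the single edge $u_mw_1$ deleted, and, ordering the vertices $u_1,\dots,u_m,w_1,\dots,w_n$, one has
\[
A(\overline{B_{m,n}})=\begin{pmatrix} 0 & M\\ M^{T} & 0\end{pmatrix},
\]
where $M$ is the $m\times n$ all-ones matrix with its $(m,1)$-entry changed to $0$ and the diagonal blocks are the $m\times m$ and $n\times n$ zero matrices.

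For the vanishing statement I would exhibit a linear dependence among the rows. If $m+n>4$ then, since $m\ge n$, we have $m\ge 3$, and the vertices $u_1,\dots,u_{m-1}$ are pairwise twins in $\overline{B_{m,n}}$ --- each has neighborhood exactly $\{w_1,\dots,w_n\}$ --- so at least two rows of $A(\overline{B_{m,n}})$ coincide and the determinant is $0$. If $m+n<4$ then, with $m\ge n\ge 1$, the only possibilities are $(m,n)=(2,1)$, where $u_2$ is an isolated vertex of $\overline{B_{2,1}}$ so the matrix has an all-zero row, and $(m,n)=(1,1)$, where $\overline{B_{1,1}}$ has no edges at all; in both cases the determinant is $0$. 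One can also see all of this at once: $M$ is a difference of two rank-one matrices, so $\operatorname{rank} M\le 2$; and for a block matrix of the above shape the column space splits as a direct sum, giving $\operatorname{rank} A(\overline{B_{m,n}})=2\operatorname{rank} M\le 4$, which is strictly less than $m+n$ whenever $m+n>4$, while the cases $m+n<4$ are the two small ones just listed.

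The value $m+n=4$ is genuinely exceptional: for $(m,n)=(2,2)$ we get $\operatorname{rank} M=2$, so $A(\overline{B_{2,2}})$ has full rank $4$ and the twin-vertex argument yields nothing. Here I would just compute directly. Since $B_{2,2}$ is the path $u_1-u_2-w_1-w_2$ and $P_4$ is self-complementary, $\overline{B_{2,2}}\cong P_4$, and a $4\times4$ cofactor expansion --- equivalently, evaluating the characteristic polynomial $x^4-3x^2+1$ of $P_4$ at $x=0$ --- gives $\det A(\overline{B_{2,2}})=1$. There is no serious obstacle in this argument; the only thing that needs care is recognizing that the complement is $K_{m,n}$ with one edge removed and noticing that the twin-vertex trick breaks down exactly at $(m,n)=(2,2)$, which is precisely the case the theorem flags and which must therefore be handled by an explicit computation.
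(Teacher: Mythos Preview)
Your argument is correct and essentially mirrors the paper's: both identify the complement's block structure, use $m\ge 3$ to find equal rows/columns when $m+n>4$, verify the two small cases $m+n<4$ by hand, and invoke the self-complementarity of $B_{2,2}\cong P_4$ for the exceptional case. The only minor differences are that you supply an extra rank-bound argument (via $\operatorname{rank} M\le 2$) and compute $\det A(P_4)=1$ from the characteristic polynomial, whereas the paper simply applies its earlier formula $\det A(B_{2,2})=(-1)^{3}(3-4)=1$.
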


\begin{proof}
We begin by noting the following equation, where $G_n$ is any simple graph on $n$ vertices:
\begin{equation}
\label{relating adjacency matrices b/w graph and complement}
A(\overline{G}_n) = A(K_n) - A(G_n).
\end{equation}
By Equations \ref{adj matrix for complete bridge graphs} and \ref{relating adjacency matrices b/w graph and complement}, we deduce
\begin{equation}
\label{adj matrix for complete bridge graph complements}
A(\overline{B_{m,n}}) = \left(\begin{array}{@{}c|c@{}}
  \text{\huge0}
  & \begin{matrix}
  1 & 1& \cdots &1\\
  \vdots & \vdots & \ddots & \vdots\\
  0 & 1 & \cdots & 1
  
  \end{matrix}\\
\hline
  \begin{matrix}
  1 & 1& \cdots &0\\
  \vdots & \vdots & \ddots & \vdots\\
  1 & 1 & \cdots & 1 \end{matrix} &
 \text{\huge0}
  
\end{array}\right).
\end{equation}
If $m+n>4$, then $m\geq 3$, so the first two columns of $A(\overline{B_{m,n}})$ are linearly independent, so the determinant is 0. If $m+n<4$, then $B_{m,n}$ is $B_{2,1}$ or $B_{1,1}$. It is easy to verify for both cases $\det{A(\overline{B_{m,n}})}=0$. If $m+n = 4$, then $B_{m,n} = B_{2,2}$, which is self-complementary (\cite{mcpg_paper}). Thus, $\det{A(\overline{B_{2,2}})} = \det{A(B_{2,2})} = -(3-4)=1$ by Theorem \ref{det of adj matrix for complete bridge graphs}.
\end{proof}

In general, the class of reseminant graphs (and certainly minimal prime graphs) is too broad to derive a formula for the determinant of their adjacency matrices. We can, however, find such formulae for specific kinds of reseminant graphs, such as suspension graphs. Our final goal for this sections is to derive such a formula, but this will require the help of the following lemma. 

\begin{lemmaN}
\label{THE lemma}
Consider a complete bridge graph $B_{m,n}$ and let $a_{ij}$ be the entry in the $i$th row and $j$th column of $A(B_{m,n})^{-1}$. Then, we find the following:

$a_{ij}=$
\begin{enumerate}
    \item $-(\dfrac{m+n-4}{m+n-3})$\,\, if $1 \leq i=j \leq m-1$ or $m+2 \leq i=j \leq m+n $
    \item $\dfrac{1}{m+n-3}$\,\, if $1 \leq i < j \leq m-1$, $1 \leq j < i \leq m-1$, $m+2 \leq i < j \leq m+n$, or $m+2 \leq j<i \leq m+n$
    \item $\dfrac{-1}{m+n-3}$ \,\, if $i \geq m+2$ and $j \leq m-1$, or $i \leq m-1$ and $j \geq m+2$.
\end{enumerate}
\end{lemmaN}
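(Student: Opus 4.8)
The plan is to write down the entire inverse $A(B_{m,n})^{-1}$ in closed form by the block-inversion formula, reusing the Schur complement already computed in the proof of Theorem~\ref{det of adj matrix for complete bridge graphs}, and then to read off the entries indexed by the non-bridge vertices. Throughout, label the vertices of $K_m$ by $1,\dots,m$ and those of $K_n$ by $m+1,\dots,m+n$, so that by Equation~\ref{adj matrix for complete bridge graphs} the bridge joins vertex $m$ to vertex $m+1$, and the non-bridge vertices are $1,\dots,m-1$ together with $m+2,\dots,m+n$. Assume $m\ge n\ge 2$, so that $A(K_n)$ and --- since then $m+n\ge 4$, by Theorem~\ref{det of adj matrix for complete bridge graphs} --- also $A(B_{m,n})$ are invertible.

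\textbf{Step 1: set up the blocks.} Write $A(B_{m,n})=\left(\begin{smallmatrix}A(K_m)&E\\ E^{T}&A(K_n)\end{smallmatrix}\right)$, where $E$ is the $m\times n$ matrix whose only nonzero entry is a $1$ in position $(m,1)$, and index $A(K_n)$ (hence $A(K_n)^{-1}$) by the vertices $m+1,\dots,m+n$ in order, so that the first coordinate corresponds to the bridge vertex $m+1$. The block-inversion formula expresses each of the four blocks of $A(B_{m,n})^{-1}$ through $A(K_n)^{-1}=\left(\frac{1}{n-1}\right)_n-I_n$ and the Schur complement $S=A(K_m)-E\,A(K_n)^{-1}E^{T}$. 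Exactly as in the proof of Theorem~\ref{det of adj matrix for complete bridge graphs}, $E\,A(K_n)^{-1}E^{T}$ has a single nonzero entry, equal to $-\tfrac{n-2}{n-1}$, in position $(m,m)$, so $S=A(K_m)+\tfrac{n-2}{n-1}\,e_me_m^{T}$ is a rank-one perturbation of $A(K_m)$ (here $e_m$ denotes the $m$th standard basis vector).

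\textbf{Step 2: invert the Schur complement.} Apply the Sherman--Morrison formula to $S$, using $A(K_m)^{-1}=\tfrac1{m-1}J_m-I_m$ and $w:=A(K_m)^{-1}e_m$, whose entries are $\tfrac1{m-1}$ in positions $1,\dots,m-1$ and $-\tfrac{m-2}{m-1}$ in position $m$. The scalar denominator simplifies to $1+\tfrac{n-2}{n-1}(A(K_m)^{-1})_{mm}=\tfrac{m+n-3}{(m-1)(n-1)}$ (this is exactly where the hypothesis $m+n\ne 3$ is used), giving $S^{-1}=A(K_m)^{-1}-\tfrac{(m-1)(n-2)}{m+n-3}\,ww^{T}$. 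Reading off the entries of $S^{-1}$ with both indices at most $m-1$ yields, after the quadratics in $m$ and $n$ collapse, $\tfrac1{m+n-3}$ off the diagonal and $-\tfrac{m+n-4}{m+n-3}$ on it; this is cases (1) and (2) restricted to the $K_m$-block. One also records $(S^{-1})_{im}=\tfrac{n-1}{m+n-3}$ for $i\le m-1$ for use below.

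\textbf{Step 3: the remaining blocks.} Substituting $S^{-1}$ into the block-inversion formula, the $K_m$--$K_n$ block of $A(B_{m,n})^{-1}$ equals $-(S^{-1}e_m)(e_1^{T}A(K_n)^{-1})$; since $(S^{-1}e_m)_i=\tfrac{n-1}{m+n-3}$ for $i\le m-1$ while $(e_1^{T}A(K_n)^{-1})_t=\tfrac1{n-1}$ at the coordinates corresponding to the non-bridge vertices $m+2,\dots,m+n$, every such product entry is $-\tfrac1{m+n-3}$, which is case (3); the transposed index range follows because $A(B_{m,n})^{-1}$ is symmetric. For the $K_n$--$K_n$ block, rather than repeat the computation one can note that $B_{m,n}\cong B_{n,m}$ via the obvious relabeling, which carries the $K_n$-block of $A(B_{m,n})^{-1}$ onto the leading $n\times n$ block of $A(B_{n,m})^{-1}$ with bridge vertices going to bridge vertices; applying the Step~2 computation with $m$ and $n$ interchanged gives $\tfrac1{m+n-3}$ off the diagonal and $-\tfrac{m+n-4}{m+n-3}$ on it, which is cases (1) and (2) for the $K_n$-block. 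All index pairs appearing in the statement are now accounted for.

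The argument is entirely mechanical; the one thing requiring care is bookkeeping --- keeping the two bridge vertices $m$ and $m+1$ separate from the non-bridge ones at every stage, since the symmetry that makes the non-bridge rows of a given side interchangeable fails for the bridge row. I expect the Sherman--Morrison simplification in Step~2, and in particular checking that the various quadratic expressions in $m$ and $n$ really do reduce to the claimed ratios of linear polynomials, to be the most error-prone point. An alternative that sidesteps $S^{-1}$ is to write down the full conjectured inverse $N$ outright --- its bridge-vertex entries being forced by the $m\leftrightarrow n$ symmetry together with consistency --- and verify $A(B_{m,n})\,N=I_{m+n}$ directly, checking one representative product entry for each index type: non-bridge/non-bridge on the same side, non-bridge/non-bridge on opposite sides, non-bridge/bridge, and bridge/bridge.
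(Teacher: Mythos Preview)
Your proposal is correct, and the approach is genuinely different from the paper's. The paper computes the required entries of $A(B_{m,n})^{-1}$ one type at a time via the cofactor formula $a_{ij}=(\operatorname{adj}A)_{ij}/\det A$: for each of the three index cases it identifies the minor $M_{ij}$, massages it by row swaps and row-additivity into a combination of determinants of smaller bridge graphs or complete graphs, and then invokes Theorem~\ref{det of adj matrix for complete bridge graphs} and the formula for $\det A(K_r)$. This is elementary but case-heavy, with separate subcases for the parity of $i+j$ and for the location of the indices. Your route instead writes the full inverse via the block (Schur complement) formula and handles the Schur complement $S=A(K_m)+\tfrac{n-2}{n-1}e_me_m^{T}$ in one stroke by Sherman--Morrison; the symmetry $B_{m,n}\cong B_{n,m}$ then dispatches the $K_n$-block without repeating the calculation. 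What your approach buys is uniformity and brevity: all the non-bridge entries fall out of a single rank-one computation, and the bridge entries come for free if one ever needs them. What the paper's approach buys is that it stays entirely within the toolkit already on the table --- cofactors and the determinant formula of Theorem~\ref{det of adj matrix for complete bridge graphs} --- without importing Sherman--Morrison. Your cautionary remarks are apt: the simplifications in Step~2 do collapse correctly (e.g.\ $-(m-2)(m+n-3)-(n-2)=-(m-1)(m+n-4)$), and the symmetry argument for the $K_n$-block is legitimate because the isomorphism $B_{m,n}\to B_{n,m}$ is a permutation similarity carrying non-bridge vertices to non-bridge vertices; note only that invoking Step~2 with $m$ and $n$ interchanged uses $n\ge 2$ (so that $A(K_n)$ and the interchanged $A(K_m)$ are invertible), which you have assumed.
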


It may be noted that this does not cover every entry in $A(B_{m,n})^{-1}$, but it includes every entry that we will need to prove the theorem.

\begin{proof}
Throughout this proof, we will be using the equation
\[
a_{ij} = \frac{(adj\, A(B_{m,n}))_{ij}}{\det{A(B_{m,n})}}. 
\]
We already know $\det{A(B_{m,n})}$ from Theorem \ref{det of adj matrix for complete bridge graphs}.
We begin by showing that $a_{ij} = -(\frac{m+n-4}{m+n-3})$ if $1 \leq i=j \leq m-1$ or $m+2 \leq i=j \leq m+n $ in order to establish (i). Thus, we are looking at the diagonal of $A(B_{m,n})^{-1}$, excluding the entries where $i$ (and $j$) represent the vertex numbers for the bridge vertices. Also, 
\[
(adj\, A(B_{m,n}))_{ij} = (((-1)^{i+j} \det{M_{ij}})^T)_{ij},
\]
where $M_{ij}$ represents the minor matrix obtained after removing the $i$th row and $j$th column from $A(B_{m,n})$. Since we're on the diagonal, $i+j$ is even, so 
\[
(adj\, A(B_{m,n}))_{ij}\bigg\vert_{i=j} = \det{M_{ii}}.
\]
Given our conditions on $i$ and $j$, $M_{ii}$ is either $B_{m-1,n}$ or $B_{m,n-1}$, so $\det{M_{ii}} = (-1)^{m+n-2}(3-(m+n-1))$. Hence, we find, under these conditions,
\[
a_{ij} = \frac{(-1)^{m+n-2}(3-(m+n-1))}{(-1)^{m+n-1}(3-(m+n))} = - \frac{m+n-4}{m+n-3}.
\]
\\
\\
Next, we will prove (ii). Consider the condition $1 \leq i < j \leq m-1$, $1 \leq j < i \leq m-1$, $m+2 \leq i < j \leq m+n$, or $m+2 \leq j<i \leq m+n$. Either $i+j$ is even or odd.
\\
\textbf{Case 1.} Suppose $i+j$ is odd.
\\
For simplicity, we will begin by supposing $1 \leq i < j \leq m-1$. We know $a_{ij}=a_{ji}$ because the inverse of a symmetric matrix is also symmetric, which takes care of the condition $1 \leq j < i \leq m-1$. In this case, and when $i+j$ is odd, $M_{ij}$ is equal to $B_{m-1,n}$ with a 1 on the diagonal in the $(i-1)$th row. Let $B_{m-1,n}[e_{i-1}]_{i-1}$ be the matrix obtained by replacing the $(i-1)$th row in $B_{m-1,n}$ with the $(m +n-1) \times 1$ vector of 0s, except for a 1 in the $(i-1)$th position. Because $B_{m-1,n}$ and $B_{m-1,n}[e_{i-1}]_{i-1}$ differ only by one row and the sum of the two $(i-1)$th row vectors gives the $(i-1)$th row vector of $M_{ij}$, we find
 \[
 \det{M_{ij}} = \det{B_{m-1,n}} + \det{B_{m-1,n}[e_{i-1}]_{i-1}} = \det{B_{m-1,n}}+\det{B_{m-2,n}}
 \]\[
 = (-1)^{m+n-2}(3-(m+n-1)) + (-1)^{m+n-3}(3-(m+n-2)).
 \]
 If $m+n$ is even, $\det{M_{ij}} = -1$ and $\det{M_{ij}} = 1$ if $m+n$ is odd. Because of the symmetry of the inverse matrix, we will ignore the transpose in the equation for the adjoint. Thus, we find 
 \[
 (adj\, A(B_{m,n}))_{ij} = 1
 \]
 if $m+n$ is even and -1 if $m+n$ is odd. From this, we can deduce, either way, that
 \[
 a_{ij} = \frac{1}{m+n-3},
 \]
where we have used $\det{A(B_{m,n})}$ to finish evaluating $a_{ij}$. If we consider the $a_{ij}$ where $m+2 \leq i < j \leq m+n$ or $m+2 \leq j<i \leq m+n$, then we can use similar reasoning, but most of our adjustments will be in the $K_n$ sub-matrix rather than the $K_m$ submatrix.
\\
\textbf{Case 2.} Suppose $i+j$ is even. 
\\
In general, the work for this case will be similar to the work for Case 1. Thus, we will simplify the work again by supposing $1\leq i < j \leq m-1$. The main difference here is that $M_{ij}$ is not quite $B_{m-1,n}$ with a 1 on one of the diagonal entries. We can acquire this matrix, though, from swapping the $(i-1)$th row of $M_{ij}$ with the $(i-2)th$ row. Let this matrix be denoted $M_{ij}^*$. Of course, this requires that $m$ be sufficiently large. Thus, we know $\det{M_{ij}} = - \det{M_{ij}^*}$. Hence, we deduce from the results in the previous case that if $m+n$ is even, $\det{M_{ij}}=1$ and $\det{M_{ij}} = -1$ if $m+n$ is odd. Because $i+j$ is even, $(adj\, A(B_{m,n}) = \det{M_{ij}}$ (once again, we are ignoring the transpose because of the symmetry of the matrix), and, from here, we find once again
\[
a_{ij} = \frac{1}{m+n-3}.
\]
Just like in case 1, the same result can be obtained through similar reasoning for the other conditions on $i$ and $j$.
\\
Hence, we have proved (ii).
\\
\\
Finally, we want to prove (iii). Because of the symmetry of $A(B_{m,n})^{-1}$, we only need to consider the case where $i\geq m+2$ and $j \leq m-1$. 

We begin by considering the minor matrix $M_{ij}$, which is a $(m+n-1) \times (m+n-1)$ matrix. We can divide this into block matrices, where the upper left and bottom right sub-matrices are almost adjacency matrices of complete graphs. Let $A_{ij}$ be the $(m-1) \times (m-1)$ sub-matrix in the upper left, let $B_{ij}$ be the $(m-1) \times n$ sub-matrix of zeroes in the upper right, let $C_{ij}$ be $n \times (m-1)$ sub-matrix on the bottom left, and let $D_{ij}$ be the remaining $n \times n$ sub-matrix on the bottom right. Note that the top row of $D_{ij}$ will be a 1 followed by $n-1$ 0s. Also, $D_{ij}$ is invertible because its rows and columns are linearly independent, which is easy to verify. Thus, we know
\[
\det{M_{ij}} = \det{A_{ij}-B_{ij}D_{ij}^{-1}C_{ij}}\det{D_{ij}},
\]
which simplifies to 
\[
\det{M_{ij}} = \det{A_{ij}}\det{D_{ij}}
\]
because $B$ is a zero matrix. Additionally, if we let $D_{ij}^*$ be the $(n-1) \times (n-1)$ matrix that follows from removing the first row and column of $D_{ij}$, then we can simplify this further to
\[
\det{M_{ij}} = \det{A_{ij}}\det{D_{ij}^*}.
\]

Now, we will take a closer look at $A_{ij}$. We assert that it takes $m-(1+j)$ row swaps to get an adjacency matrix for a complete graph with a single 1 on the diagonal. For example, if $j=1$, meaning that we are looking at the $(m-1) \times (m-1)$ $A_{ij}$ matrix for some minor matrix $M_{i1}$ (the $i$ does not affect $A_{ij}$), then we find
\[
A_{i1} = \begin{matrix}
1 & 1 & 1 & \cdots & 1 & 1 \\
0 & 1 & 1 & \cdots & 1 & 1 \\
1 & 0 & 1 & \cdots & 1 & 1 \\
1 & 1 & 0 & \cdots & 1 & 1 \\
\vdots & \vdots & \vdots & \vdots & \vdots & \vdots \\
1 & 1 & 1 & \cdots & 0 & 1
\end{matrix}
\]
and we can get 
\[
A'_{i1} = \begin{matrix}
0 & 1 & 1 & \cdots & 1 & 1 \\
1 & 0 & 1 & \cdots & 1 & 1 \\
1 & 1 & 0 & \cdots & 1 & 1 \\
\vdots & \vdots & \vdots & \vdots & \vdots & \vdots \\
1 & 1 & 1 & \cdots & 0 & 1\\
1 & 1 & 1 & \cdots & 1 & 1 
\end{matrix}
\]
by $m-2$ row swaps. Hence, we can deduce
\[
\det{A_{ij}} = (-1)^{m-(1+j)}\left(\det{A(K_{m-1})} + \det{A(K_{m-1})[e_{m-1}]_{m-1}}\right)
\]\[
 = (-1)^{m-(1+j)}\left(\det{A(K_{m-1})} + \det{A(K_{m-2})}\right) = (-1)^{m-(1+j)} (-1)^m = (-1)^{1+j},
\]
where $A(K_{m-1})[e_{m-1}]_{m-1}$ represents the matrix obtained by replacing the $(m-1)$th row of $A(K_{m-1})$ with the vector containing all 0s except for a 1 in the $(m-1)$th position.

Now, we want to shift our focus to $D_{ij}$. As we said above, $\det{D_{ij}} = \det{D_{ij}^*}$ because the first row of $D_{ij}$ consists of a 1 followed by 0s, so we can actually focus on $D_{ij}^*$. This time, we assert (and leave to the reader to verify) that it takes $i-(m+2)$ swaps to make $D_{ij}^*$ equivalent to the adjacency matrix of a complete graph (on $n-1$ vertices) with a 1 on the diagonal. This time, $D_{ij}$ is unaffected by $j$. We find
\[
\det{D_{ij}} = \det{D_{ij}^*} = (-1)^{i-(m+2)}\left(\det{A(K_{n-1})} + \det{A(K_{n-1})[e_{i-1}]_{i-1}}\right)
\]\[
= (-1)^{i-(m+2)}\left(\det{A(K_{n-1})} +\det{A(K_{n-2})}\right) = (-1)^{i-(m+2)} (-1)^n,
\]
which we can simply write as $\det{D_{ij}} = (-1)^{i-(m+n)}$ because our base is -1.

Thus, we can conclude
\[
\det{M_{ij}} = (-1)^{1+j} (-1)^{i-(m+n)} = (-1)^{1+(i+j)-(m+n)}.
\]
As before, we will ignore the transpose in the adjoint because our matrix is symmetric. We find
\[
(adj\, A(B_{m,n}))_{ij} = (-1)^{i+j} (-1)^{1+(i+j)-(m+n)} = (-1)^{1-(m+n)} = (-1)^{1+(m+n)}.
\]
Now, we can calculate $a_{ij}$:
\[
a_{ij} = \frac{(-1)^{1+m+n}}{(-1)^{m+n-1}(3-(m+n))} = \frac{1}{3-(m+n)} = \frac{-1}{m+n-3}.
\]
This is what we sought to prove.
\end{proof}

Now, with Lemma \ref{THE lemma}, we are equipped to prove the following theorem.

\begin{theoremN}
\label{determinant C -> G}
The formula for the determinant of the adjacency matrix of a suspension graph is
\begin{equation}
    \det{A(S(m,n))} = (-1)^{m+n}(4mn-5(m+n)+6)
\end{equation}
\end{theoremN}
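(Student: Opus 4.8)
The plan is to write $A(S(m,n))$ as a bordering of $A(B_{m,n})$ and then apply the companion form of the block‑determinant identity used in the proof of Theorem \ref{det of adj matrix for complete bridge graphs} — the version $\det\!\left(\begin{smallmatrix}A&B\\ C&D\end{smallmatrix}\right) = \det A\,\det(D - CA^{-1}B)$, valid when the top‑left block is invertible — with $A(B_{m,n})$ itself in the role of the invertible block. Order the vertices of $S(m,n)$ so that the first $m+n$ are those of $B_{m,n}$, listed as in Equation \ref{adj matrix for complete bridge graphs} (so vertices $m$ and $m+1$ are the two bridge vertices), and the last vertex is the new suspension vertex. Then
\[
A(S(m,n)) = \left(\begin{array}{@{}c|c@{}} A(B_{m,n}) & w \\ \hline w^{T} & 0 \end{array}\right),
\]
where $w \in \mathbb{R}^{m+n}$ has $w_i = 1$ for $i \in \{1,\dots,m-1\}\cup\{m+2,\dots,m+n\}$ and $w_i = 0$ for $i \in \{m, m+1\}$. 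Since $\det A(B_{m,n}) = (-1)^{m+n-1}(3-(m+n))$ is nonzero whenever $m+n\neq 3$, which holds for all suspension graphs under consideration, the identity with the $1\times 1$ corner $D=(0)$ gives
\[
\det A(S(m,n)) = -\det A(B_{m,n}) \cdot \big(w^{T} A(B_{m,n})^{-1} w\big).
\]

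The main work is to evaluate the scalar $w^{T} A(B_{m,n})^{-1} w = \sum_{i,j \in S} a_{ij}$, where $S = \{1,\dots,m-1\}\cup\{m+2,\dots,m+n\}$ is the support of $w$ and the $a_{ij}$ are the entries of $A(B_{m,n})^{-1}$. This is exactly where Lemma \ref{THE lemma} does the heavy lifting: the support $S$ omits precisely the two bridge indices, so every entry $a_{ij}$ that occurs is one of those described by the lemma. Writing $S_1 = \{1,\dots,m-1\}$ and $S_2 = \{m+2,\dots,m+n\}$, the sum splits into three types of ordered‑pair contributions — the $(m-1)+(n-1)$ diagonal terms, each $-\tfrac{m+n-4}{m+n-3}$ by part (i); the $(m-1)(m-2)+(n-1)(n-2)$ off‑diagonal terms with both indices in the same $S_k$, each $\tfrac{1}{m+n-3}$ by part (ii); and the $2(m-1)(n-1)$ terms with one index in $S_1$ and the other in $S_2$, each $-\tfrac{1}{m+n-3}$ by part (iii). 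Placing all of these over the common denominator $m+n-3$ and simplifying the numerator collapses it to $-(4mn - 5(m+n) + 6)$, so that
\[
w^{T} A(B_{m,n})^{-1} w = \frac{-(4mn - 5(m+n) + 6)}{m+n-3}.
\]

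Substituting $\det A(B_{m,n}) = (-1)^{m+n-1}(3-(m+n))$ into the displayed product and cancelling the factor $3-(m+n) = -(m+n-3)$ against the denominator then yields $\det A(S(m,n)) = (-1)^{m+n}(4mn - 5(m+n)+6)$, as claimed. I expect the only genuine obstacle to be the bookkeeping in the middle step: correctly counting how many of the ordered index pairs in $S\times S$ fall into each of the three cases of Lemma \ref{THE lemma}, and tracking signs carefully both when the three contributions are combined and when the resulting scalar is multiplied by the sign of $\det A(B_{m,n})$. Everything else is a mechanical consequence of results already established.
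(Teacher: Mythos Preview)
Your proposal is correct and follows essentially the same approach as the paper: both border $A(B_{m,n})$ by the vector $w$ (the paper's $\vec v$), reduce via the Schur complement to $\det A(S(m,n)) = -\det A(B_{m,n})\cdot w^{T}A(B_{m,n})^{-1}w$, and then evaluate $w^{T}A(B_{m,n})^{-1}w$ by summing exactly the entries supplied by Lemma \ref{THE lemma}. The only cosmetic difference is that the paper triangularizes via an explicit block elementary matrix rather than quoting the Schur‑complement determinant identity directly.
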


\begin{proof} 
If $\Gamma \in \mathcal{G}$ is generated from a complete bridge graph $B_{m,n}$, we know
\[
A(S(m,n)) = \left(\begin{array}{@{}c|c@{}}
 A(B_{m,n})
  & \vec{v}\\
\hline
  \vec{v}^T &
  0
\end{array}\right),
\]
where $\vec{v}$ is the vector of 1s, except for two 0s in the coordinate positions of the bridge vertices. Also,
\[
\left(\begin{array}{@{}c|c@{}}
 A(B_{m,n})
  & \vec{v}\\
\hline
  \vec{v}^T &
  0
\end{array}\right) \left(\begin{array}{@{}c|c@{}}
 I
  & -A(B_{m,n})^{-1} \vec{v}\\
\hline
  0 &
  I
\end{array}\right) = \left(\begin{array}{@{}c|c@{}}
 A(B_{m,n})
  & \vec{0}\\
\hline
  \vec{v}^T &
  -\vec{v}^T A(B_{m,n})^{-1} \vec{v}
\end{array}\right),
\]
from which it follows
\[
\det{A(\Gamma)} = \det{A(B_{m,n})} \det{-\vec{v}^T A(B_{m,n})^{-1} \vec{v}} = - \det{A(B_{m,n})} \det{\vec{v}^T A(B_{m,n})^{-1} \vec{v}}.
\]
We already know $\det{A(B_{m,n})}$. Thus, if $a_{ij}$ are the entries of $A(B_{m,n})^{-1}$, we can deduce
\[
\det{\vec{v}^T A(B_{m,n})^{-1} \vec{v}} = \sum_{\substack{1\leq i,j \leq m-1 \\\textrm{or}\, m+2 \leq i,j \leq m+n}}a_{ij},
\]
and this summation covers all of the entries considered in Lemma \ref{THE lemma}. When we add all these values together, we get
\[
\det{\vec{v}^T A(B_{m,n})^{-1} \vec{v}} = \frac{1}{m+n-3}(-(m+n-4)(m+n-2)\]\[ + (m-1)(m-2)+(n-1)(n-2)-2(m-1)(n-1)).
\]
Thus, we find
\[
\det{A(S(m,n))} = \frac{(-1)^{m+n-1}(3-(m+n))}{m+n-3}((m+n-4)(m+n-2)\]\[ + (m-1)(2-m)+(n-1)(2-n)+2(m-1)(n-1)),
\]
which simplifies to 
\[
\det{A(S(m,n))} = (-1)^{m+n}(4mn-5(m+n)+6),
\]
and this is what we were trying to prove.
\end{proof}

Before moving on, it may help to consider an example to show how the last two results work. Consider $S(4,3)$, the suspension graph generated by $B_{4,3}$ via the Suspension Method (these are graphs (ii) and (i) in Figure 1, respectively). The adjacency matrix of $S(4,3)$ is
\[
A(S(4,3))=
\left(\begin{matrix}
0 & 1 & 1 & 1 & 0 & 0 & 0 & 1\\
1 & 0 & 1 & 1 & 0 & 0 & 0 & 1\\
1 & 1 & 0 & 1 & 0 & 0 & 0 & 1\\
1 & 1 & 1 & 0 & 0 & 0 & 0 & 0\\
0 & 0 & 0 & 0 & 0 & 1 & 1 & 0\\
0 & 0 & 0 & 0 & 1 & 0 & 1 & 1\\
0 & 0 & 0 & 0 & 1 & 1 & 0 & 1\\
1 & 1 & 1 & 0 & 0 & 1 & 1 & 0
\end{matrix}\right),
\]
and the adjacency matrix of $B_{4,3}$ and its inverse are
\[
A(B_{4,3})=
\left(\begin{matrix}
0 & 1 & 1 & 1 & 0 & 0 & 0 \\
1 & 0 & 1 & 1 & 0 & 0 & 0 \\
1 & 1 & 0 & 1 & 0 & 0 & 0 \\
1 & 1 & 1 & 0 & 1 & 0 & 0 \\
0 & 0 & 0 & 1 & 0 & 1 & 1 \\
0 & 0 & 0 & 0 & 1 & 0 & 1 \\
0 & 0 & 0 & 0 & 1 & 1 & 0 
\end{matrix}\right), \,
A(B_{4,3})^{-1}=
\left(\begin{matrix}
-\frac{3}{4} & \frac{1}{4} & \frac{1}{4} & \frac{1}{2} & \frac{1}{4} & -\frac{1}{4} & -\frac{1}{4} \\
\frac{1}{4} & -\frac{3}{4} & \frac{1}{4} & \frac{1}{2} & \frac{1}{4} & -\frac{1}{4} & -\frac{1}{4} \\
\frac{1}{4} & \frac{1}{4} & -\frac{3}{4} & \frac{1}{2} & \frac{1}{4} & -\frac{1}{4} & -\frac{1}{4} \\
\frac{1}{2} & \frac{1}{2} & \frac{1}{2} & -1 & -\frac{1}{2} & \frac{1}{2} & \frac{1}{2} \\
\frac{1}{4} & \frac{1}{4} & \frac{1}{4} & -\frac{1}{2} & -\frac{3}{4} & \frac{3}{4} & \frac{3}{4} \\
-\frac{1}{4} & -\frac{1}{4} & -\frac{1}{4} & \frac{1}{2} & \frac{3}{4} & -\frac{3}{4} & \frac{1}{4} \\
-\frac{1}{4} & -\frac{1}{4} & -\frac{1}{4} & \frac{1}{2} & \frac{3}{4} & \frac{1}{4} & -\frac{3}{4} 
\end{matrix}\right)
\]
The reader is encouraged to verify Lemma \ref{THE lemma} with $A(B_{4,3})^{-1}$. Additionally, one can easily use a CAS to determine $\det{A(S(4,3))}=-19$, which agrees with Theorem \ref{determinant C -> G}. 
\end{section}

\begin{section}{The Spectrum of $B_{m,m-1}$}

In this section, we will determine the characteristic polynomial of complete bridge graphs $B_{m,m-1}$ and the ordering of the eigenvalues. In general, we will let $\lambda_i$ denote the $i$th eigenvalue, where $\lambda_1 \geq \lambda_2 \geq ... \geq \lambda_{2m-1}$. First, we will establish some lemmas that are well-known in matrix and spectral graph theory.

\begin{lemmaN}
\label{PF-lemma}
\textbf{(Perron-Frobenius)} A real square matrix with all positive entries has a unique largest eigenvalue $\lambda_1$, and for any other eigenvalue $\lambda$, $|\lambda|<\lambda_1$. (See \cite{Perron_Frobenius})
\end{lemmaN}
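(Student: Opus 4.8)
The plan is to treat this as the classical Perron--Frobenius theorem for entrywise positive matrices and give a self-contained variational (Collatz--Wielandt) proof; alternatively one may simply invoke \cite{Perron_Frobenius}. Write $A=(A_{ij})$ for the given $n\times n$ matrix with $A_{ij}>0$ for all $i,j$. For a nonzero vector $x\in\mathbb{R}^n$ with $x\ge 0$ (entrywise), set
\[
r(x)=\min_{i\,:\,x_i>0}\frac{(Ax)_i}{x_i},
\]
the largest real $t$ with $Ax\ge tx$, and define $\lambda_1=\sup\{\,r(x): x\ge 0,\ x\ne 0\,\}$, the supremum being unchanged if $x$ is restricted to the simplex $\Delta=\{x\ge 0:\sum_i x_i=1\}$. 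The single elementary fact driving everything is that a positive matrix maps any nonzero nonnegative vector to a strictly positive one.

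First I would record that $\lambda_1$ is finite (bounded above by the maximal row sum of $A$) and strictly positive (for $x$ the all-ones vector, $Ax$ has all entries positive, so $r(x)>0$). Next I would show the supremum is attained: taking $x^{(k)}\in\Delta$ with $r(x^{(k)})\to\lambda_1$, pass to a convergent subsequence $x^{(k)}\to x^*\in\Delta$; from $Ax^{(k)}\ge r(x^{(k)})x^{(k)}$ and continuity one gets $Ax^*\ge\lambda_1 x^*$. Then I would upgrade this to equality: if $v:=Ax^*-\lambda_1 x^*$ were nonzero and nonnegative, then $A(Ax^*)-\lambda_1(Ax^*)=Av>0$ entrywise, and since $Ax^*>0$ this forces $r(Ax^*)>\lambda_1$, contradicting maximality; hence $Ax^*=\lambda_1 x^*$. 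The same observation shows $x^*=\lambda_1^{-1}Ax^*>0$ entrywise, so $\lambda_1$ is an eigenvalue with a strictly positive eigenvector. (If one prefers, this existence step can be carried out in one stroke via Brouwer's fixed point theorem applied to $x\mapsto Ax/\|Ax\|_1$ on $\Delta$.)

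To see that $\lambda_1$ dominates every eigenvalue in modulus, let $\lambda\in\mathbb{C}$ with $Az=\lambda z$, $z\ne 0$, and let $|z|$ denote the entrywise modulus. The triangle inequality gives $|\lambda|\,|z_i|=|(Az)_i|\le\sum_j A_{ij}|z_j|=(A|z|)_i$, i.e. $A|z|\ge|\lambda|\,|z|$, so $|\lambda|\le r(|z|)\le\lambda_1$. For strictness when $\lambda\ne\lambda_1$, suppose $|\lambda|=\lambda_1$; then $A|z|\ge\lambda_1|z|$, which by the argument of the previous paragraph forces $A|z|=\lambda_1|z|$ with $|z|>0$, and hence equality in each triangle inequality $|\sum_j A_{ij}z_j|=\sum_j A_{ij}|z_j|$. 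Since all $A_{ij}>0$, this forces the $z_j$ to share a common argument, say $z=e^{\mathrm{i}\theta}|z|$; substituting into $Az=\lambda z$ gives $A|z|=\lambda|z|$, and comparing with $A|z|=\lambda_1|z|$ (using $|z|>0$) yields $\lambda=\lambda_1$, a contradiction. Thus $|\lambda|<\lambda_1$ for every eigenvalue $\lambda\ne\lambda_1$, which in particular gives the asserted uniqueness of the largest eigenvalue.

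The boundedness and positivity of $\lambda_1$ and the compactness passage to the limit are routine. The one genuinely delicate point is the strict-inequality case: from the equality case of the triangle inequality one must extract that all coordinates of $z$ have a common argument and then close the loop to conclude $\lambda=\lambda_1$; this is where the positivity (not just nonnegativity) of the entries of $A$ is essential.
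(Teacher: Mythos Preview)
Your proof is correct and complete: the Collatz--Wielandt variational characterization, the compactness argument on the simplex, the upgrade from $Ax^*\ge\lambda_1 x^*$ to equality, and the equality case of the triangle inequality are all carried out cleanly, and the positivity of the entries is used exactly where it is needed.

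The paper, however, does not prove this lemma at all; it merely states it as a well-known result with a citation to \cite{Perron_Frobenius}. So there is nothing to compare at the level of argument. What you have supplied is strictly more than the paper provides: a self-contained proof where the paper simply invokes the literature. If your goal is to match the paper, a one-line citation suffices; if your goal is completeness, your argument stands on its own.
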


\begin{lemmaN}
\label{eigenvalue multiplicity}
For a graph $\Gamma$ on $n$ vertices, the multiplicity of an eigenvalue $\lambda$ is $n-$rank$(I\lambda - A(\Gamma))$.
\end{lemmaN}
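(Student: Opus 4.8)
The plan is to reduce this to two standard facts: the spectral theorem for real symmetric matrices and the rank--nullity theorem. The key observation is that $A(\Gamma)$ is a real symmetric matrix (since $\Gamma$ is an undirected simple graph, $A(\Gamma)^T = A(\Gamma)$), so by the spectral theorem $A(\Gamma)$ is orthogonally diagonalizable; in particular it is diagonalizable, and therefore for every eigenvalue $\lambda$ the algebraic multiplicity (the multiplicity of $\lambda$ as a root of the characteristic polynomial, which is what ``multiplicity of an eigenvalue'' means here) equals the geometric multiplicity, i.e.\ the dimension of the eigenspace $E_\lambda = \{x \in \mathbb{R}^n : A(\Gamma)x = \lambda x\}$.

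Next I would rewrite the eigenspace as a kernel: $E_\lambda = \ker(\lambda I_n - A(\Gamma))$. Applying the rank--nullity theorem to the linear map $\lambda I_n - A(\Gamma) : \mathbb{R}^n \to \mathbb{R}^n$ gives
\[
\dim \ker(\lambda I_n - A(\Gamma)) + \operatorname{rank}(\lambda I_n - A(\Gamma)) = n,
\]
so $\dim E_\lambda = n - \operatorname{rank}(\lambda I_n - A(\Gamma))$. Combining this with the equality of algebraic and geometric multiplicities from the previous paragraph yields that the multiplicity of $\lambda$ equals $n - \operatorname{rank}(\lambda I_n - A(\Gamma))$, which is the claim. (If $\lambda$ is not an eigenvalue at all, both sides are $0$, so the formula holds trivially in that case too.)

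There is really no substantive obstacle here; the only thing worth being careful about is not skipping the symmetry step, since the identity ``multiplicity $= n - \operatorname{rank}(\lambda I - M)$'' is false for a general (non-diagonalizable) matrix $M$ — it computes the geometric multiplicity, which can be strictly smaller than the algebraic one. So the ``hard part,'' such as it is, is simply recording that the adjacency matrix of a graph is symmetric and invoking the spectral theorem to collapse the distinction between the two notions of multiplicity. A reference for the spectral theorem and for rank--nullity can be cited, or both can be taken as standard.
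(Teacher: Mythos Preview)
Your proof is correct. The paper itself does not actually prove this lemma: it is stated as one of two ``lemmas that are well-known in matrix and spectral graph theory'' (alongside Perron--Frobenius) and is used without justification. Your argument via the spectral theorem for real symmetric matrices and rank--nullity is exactly the standard justification one would supply, and your remark that the symmetry of $A(\Gamma)$ is the essential ingredient (to collapse algebraic and geometric multiplicity) is well placed.
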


Now, we can use these results to start proving some new things about the spectrum of $B_{m,m-1}.$ We will consider the eigenvalue $-1$ extensively in this paper, so instead of saying rank$(-I-A(\Gamma))$, we will say rank$(I+A(\Gamma))$, for the two are identical.

\begin{lemmaN}
\label{multiplicity of -1 eigenvalues for A(B_m,n)}
If $m+n > 4$, $A(B_{m,n})$ has an eigenvalue -1 with multiplicity $m+n-4$.
\end{lemmaN}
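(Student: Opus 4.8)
The plan is to invoke Lemma \ref{eigenvalue multiplicity}: the multiplicity of the eigenvalue $-1$ of $A(B_{m,n})$ equals $(m+n)-\operatorname{rank}(I+A(B_{m,n}))$, so it suffices to prove that $\operatorname{rank}(I+A(B_{m,n}))=4$ whenever $m+n>4$. Note that within the admissible range for complete bridge graphs, $m+n>4$ forces $m\geq 3$ and $n\geq 2$, so the coordinate indices $1$, $m$, $m+1$, and $m+n$ are all distinct; I would record this at the outset.

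First I would write $I+A(B_{m,n})$ in block form. Since $A(K_k)=J_k-I_k$, adding $I_{m+n}$ turns the two diagonal blocks of \eqref{adj matrix for complete bridge graphs} into the all-ones blocks $J_m$ and $J_n$, while the two off-diagonal blocks (each consisting of a single $1$ coming from the bridge edge) are unchanged. Reading off the rows: rows $1,\dots,m-1$ all equal $r_A=(\underbrace{1,\dots,1}_{m},\underbrace{0,\dots,0}_{n})$; rows $m+2,\dots,m+n$ all equal $r_D=(\underbrace{0,\dots,0}_{m},\underbrace{1,\dots,1}_{n})$; row $m$ equals $r_B=r_A+e_{m+1}$; and row $m+1$ equals $r_C=r_D+e_m$, where $e_k$ is the $k$-th standard basis vector. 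Hence the row space of $I+A(B_{m,n})$ is spanned by the four vectors $r_A,r_B,r_C,r_D$, giving $\operatorname{rank}(I+A(B_{m,n}))\leq 4$.

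Next I would check that these four vectors are linearly independent in the range $m\geq 3$, $n\geq 2$. Given a relation $\alpha r_A+\beta r_B+\gamma r_C+\delta r_D=0$, inspecting coordinate $1$ (where only $r_A,r_B$ are nonzero and both equal $1$) kills $\alpha+\beta$; coordinate $m+n$ kills $\gamma+\delta$; coordinate $m$ then forces $\gamma=0$, hence $\delta=0$; and coordinate $m+1$ forces $\beta=0$, hence $\alpha=0$. Therefore $\operatorname{rank}(I+A(B_{m,n}))=4$, and Lemma \ref{eigenvalue multiplicity} yields that $-1$ is an eigenvalue of multiplicity exactly $m+n-4$. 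As an optional consistency check on the lower bound, one can exhibit the kernel directly: the $m+n-4$ vectors $e_i-e_{i+1}$ for $1\leq i\leq m-2$ and for $m+2\leq i\leq m+n-1$ all lie in $\ker(I+A(B_{m,n}))$ (each row $r_A,r_B,r_C,r_D$ pairs with such a difference to give $0$), and they are independent since they have pairwise (essentially) disjoint supports within the two cliques.

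I do not expect a genuine obstacle here; the only thing requiring care is the row bookkeeping — correctly identifying which rows of $I+A(B_{m,n})$ are of which of the four types and checking independence of $r_A,r_B,r_C,r_D$ — together with confirming that the degenerate small configurations are exactly the ones ruled out by the hypothesis $m+n>4$.
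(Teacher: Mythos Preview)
Your proposal is correct and follows essentially the same approach as the paper: both compute the multiplicity via Lemma~\ref{eigenvalue multiplicity} by writing $I+A(B_{m,n})$ in block form with $J_m$ and $J_n$ on the diagonal and showing its rank is exactly $4$. The paper simply asserts that there are four linearly independent rows (the $m$th, the $(m+1)$th, and one each from the two all-ones blocks), whereas you spell out the independence check coordinatewise and add an explicit description of the $(-1)$-eigenspace; these are elaborations of the same argument rather than a different route.
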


Recall that we let $m \geq n$ by convenction, so if $m+n>4$, then $m\geq 3$.

\begin{proof}
By Lemma \ref{eigenvalue multiplicity}, we find that -1 has a multiplicity of 
\[
m+n-\textrm{rank}(I+A(B_{m,n})).
\]
It is not difficult to see
\[
I+A(B_{m,n}) = \left(\begin{array}{@{}c|c@{}}
  J_m
  & \begin{matrix}
  0 & 0& \cdots &0\\
  \vdots & \vdots & \ddots & \vdots\\
  1 & 0 & \cdots & 0
  
  \end{matrix}\\
\hline
  \begin{matrix}
  0 & 0& \cdots &1\\
  \vdots & \vdots & \ddots & \vdots\\
  0 & 0 & \cdots & 0 \end{matrix} &
  J_n
  
\end{array}\right).
\]
Clearly, this matrix only has four linearly independent rows (and columns)-- we can choose the $m$th row, the $(m+1)$th row, and any single row between 1 and $m-1$ and between $m+2$ and $m+n$. Hence, rank($I+A(B_{m,n}))=4$, so the lemma follows. 
\end{proof}

Thus, we know all but four of the eigenvalues of any complete bridge graph $B_{m,n}$. Additionally, if we let $\lambda^*_i$ be these remaining eigenvalues that are not -1, where $1 \leq i \leq 4$ and $\lambda^*_i \geq \lambda^*_{i+1}$, we can use the fact that the product of all the eigenvalues of an adjacency matrix is its determinant and the sum of the eigenvalues is the trace (which is 0) to deduce $\prod_{i=1}^4 \lambda^*_i = m+n-3$ and $\sum_{i=1}^4 \lambda^*_i = m+n-4$. Hence, we find the equation
\[
\prod_{i=1}^4 \lambda^*_i-\sum_{i=1}^4 \lambda^*_i = 1. 
\]
We can, however, say much more about these remaining eigenvalues. In the following lemma, we find upper and lower bounds for the greatest eigenvalue $\lambda_1$ for any complete bridge graph before focusing specifically on $B_{m,m-1}$.

\begin{lemmaN}
\label{bounds of lambda_1}
Let $\lambda_1$ be the greatest eigenvalue of $A(B_{m,n})$. Then,
\begin{equation}
    m-1 \leq \lambda_1 \leq m.
\end{equation}
If $m = n$, then
\begin{equation}
    m-\left(1-\frac{1}{m}\right) \leq \lambda_1 \leq m.
\end{equation}
\end{lemmaN}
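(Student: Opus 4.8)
The plan is to use standard eigenvalue interlacing / Rayleigh quotient estimates for the adjacency matrix $A(B_{m,n})$. For the upper bound $\lambda_1 \le m$, I would observe that every vertex of $B_{m,n}$ has degree at most $m$ (a vertex in $K_m$ adjacent to the bridge has degree $m$; all others have degree at most $m-1$ since $m\ge n$), and then invoke the classical fact that the largest adjacency eigenvalue is at most the maximum degree, $\lambda_1 \le \Delta(B_{m,n}) = m$. For the lower bound $\lambda_1 \ge m-1$, I would use the Rayleigh quotient: take the test vector $x$ that is $1$ on the $m$ vertices of $K_m$ and $0$ elsewhere, compute $x^{T}A(B_{m,n})x / x^{T}x$, and note that $K_m$ alone contributes $m(m-1)$ (edge pairs counted twice) plus a positive contribution from the bridge edge, so the quotient is $\ge (m-1)$, whence $\lambda_1 \ge m-1$. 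Alternatively, since $B_{m,n}$ contains $K_m$ as an induced subgraph, Cauchy interlacing gives $\lambda_1(B_{m,n}) \ge \lambda_1(K_m) = m-1$ immediately; I would probably present the interlacing argument since it is cleanest.

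For the sharper lower bound in the case $m=n$, namely $\lambda_1 \ge m - (1 - \tfrac1m) = m - \tfrac{m-1}{m}$, the interlacing bound $m-1$ is not enough, so I would refine the Rayleigh quotient computation. Here $B_{m,m}$ is symmetric under swapping the two $K_m$ blocks, so I would use a symmetric test vector: put weight $1$ on all $2(m-1)$ non-bridge vertices and weight $t$ on each of the two bridge vertices, with $t$ a parameter to be optimized (or simply choose a convenient value such as $t=1$, i.e. the all-ones vector, and see what it yields). Computing $x^{T}Ax$ and $x^{T}x$ as explicit functions of $m$ (and $t$) reduces the bound to a one-variable optimization; I expect the all-ones vector already to give a quotient of the form $(2|E|)/(2m)$ where $|E| = 2\binom{m}{2}+1 = m(m-1)+1$, so the quotient is $\frac{m(m-1)+1}{m} = m-1+\tfrac1m = m - \tfrac{m-1}{m}$, which is exactly the claimed lower bound. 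So the all-ones test vector should suffice and no optimization over $t$ is actually needed.

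The main obstacle, such as it is, is bookkeeping: correctly counting the edges of $B_{m,n}$ (two complete graphs plus one bridge edge) and making sure the degree-based upper bound $\lambda_1 \le \Delta$ is applied with the right $\Delta$, together with being careful that the convention $m \ge n$ is what forces $\Delta = m$ rather than something larger. There is no deep difficulty here; the two inequalities are a maximum-degree bound above and a subgraph-interlacing (or explicit Rayleigh quotient) bound below, and the $m=n$ refinement is just a cleaner choice of test vector exploiting the extra symmetry. I would close by remarking that both bounds are used in the next section to pin down the location of $\lambda_1$ among the four non-$(-1)$ eigenvalues $\lambda_1^\ast, \dots, \lambda_4^\ast$ of $B_{m,m-1}$.
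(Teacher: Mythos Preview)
Your approach is essentially the same as the paper's. The paper phrases the lower bound as ``$\lambda_1$ is at least the average degree of any induced subgraph'' and applies this to $K_m$ (giving $m-1$) and, in the $m=n$ case, to the whole graph (giving $m-1+\tfrac{1}{m}$); this is exactly your Rayleigh quotient with the indicator vector on $K_m$ and with the all-ones vector, respectively, and the upper bound $\lambda_1\le\Delta=m$ is identical. One small slip: with the indicator vector on $K_m$, the bridge edge does \emph{not} contribute positively to $x^{T}Ax$ (its other endpoint has weight $0$), so the quotient is exactly $m-1$, not strictly larger; this does not affect your conclusion.
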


\begin{proof}
This follows from the fact that the greatest eigenvalue must be bounded below by the average vertex degree of every induced subgraph and bounded above by the maximum vertex degree. We know the maximum degree is $m$, so $\lambda_1 \leq m$. We can find the tightest lower bound by considering the average degree on the complete induced subgraph of $m$ vertices, which is just $m-1$, so this establishes the lower bound. 
\\
\\
Now, we just need to establish the lower bound in the special case where $m=n$. This lower bound is tighter because the average degree of the whole graph is greater than the average degree of either of the two complete induced subgraphs because there are now two vertices with $m$ edges, whereas they all have $m-1$ edges in the subgraphs. We calculate the average degree as follows:
\[
d_{avg} = \frac{2(m-1)^2+2m}{2m} = \frac{2m^2-2m+2}{2m} = m - \left(1-\frac{1}{m}\right),
\]
which is what we wanted to show.
\end{proof}

From this, we know $\lambda^*_1 =\lambda_1$ because clearly the greatest eigenvalue must be one of the four eigenvalues that are not -1. Next, in the following lemma, we find that the second largest eigenvalue, $\lambda_2$, can be determined exactly for $B_{m,m-1}$.

\begin{lemmaN}\label{lambda_2 of case n=m-1}
If $m>2$, then $A(B_{m,m-1})$ has the eigenvalue $\lambda_2 = m-2$.
\end{lemmaN}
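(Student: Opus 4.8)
\emph{Proof proposal.} The plan is to first produce an explicit eigenvector witnessing that $m-2$ is an eigenvalue of $A(B_{m,m-1})$, and then to show it is in fact the \emph{second} largest eigenvalue by pinning down enough symmetric functions of the spectrum to control how many eigenvalues can exceed $m-2$.

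\textbf{Step 1: exhibit an eigenvector.} Label the $2m-1$ vertices so that $1,\dots,m-1$ are the non-bridge vertices of $K_m$, vertex $m$ is the bridge vertex of $K_m$, vertex $m+1$ is the bridge vertex of $K_{m-1}$, and $m+2,\dots,2m-1$ are the non-bridge vertices of $K_{m-1}$, consistent with the block form in \eqref{adj matrix for complete bridge graphs}. I claim the vector $x$ given by $x_i=1$ for $1\le i\le m-1$, $\ x_m=0$, and $x_i=-(m-1)$ for $m+1\le i\le 2m-1$ satisfies $A(B_{m,m-1})\,x=(m-2)\,x$. This reduces to checking the eigenvalue equation at a vertex of each of the four types, and each case is a one-line neighbor-sum computation: at a non-bridge vertex of $K_m$ the sum is $(m-2)\cdot 1+0=m-2$; at the bridge vertex of $K_m$ it is $(m-1)\cdot 1+(-(m-1))=0=(m-2)\cdot 0$; at the bridge vertex of $K_{m-1}$ it is $0+(m-2)\bigl(-(m-1)\bigr)$; and at a non-bridge vertex of $K_{m-1}$ it is $\bigl(-(m-1)\bigr)+(m-3)\bigl(-(m-1)\bigr)=(m-2)\bigl(-(m-1)\bigr)$. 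Hence $m-2$ is an eigenvalue.

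\textbf{Step 2: set up the spectrum.} By Lemma~\ref{multiplicity of -1 eigenvalues for A(B_m,n)} the eigenvalue $-1$ occurs with multiplicity exactly $m+(m-1)-4=2m-5$, so there are precisely four eigenvalues $\lambda_1^*\ge\lambda_2^*\ge\lambda_3^*\ge\lambda_4^*$ (counted with multiplicity) that are different from $-1$, and one of them equals $m-2$ by Step 1; write $\{a,b,c\}$ for the other three. Using $\operatorname{tr}A=0$, $\ \operatorname{tr}A^2=2\,|E(B_{m,m-1})|=2\bigl((m-1)^2+1\bigr)$, and $\det A(B_{m,m-1})=-2(m-2)$ (this is \eqref{det of adj matrix for complete bridge graphs} with $n=m-1$), together with the contributions $-(2m-5),\ (2m-5),\ (-1)^{2m-5}$ of the $2m-5$ copies of $-1$ to the relevant power sums and to the determinant, I obtain $a+b+c=m-3$, $\ ab+bc+ca=-2(m-1)$, and $abc=2$.

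\textbf{Step 3: locate $m-2$ in the ordering.} Since $abc=2>0$, an even number of $a,b,c$ is negative, and since $ab+bc+ca=-2(m-1)<0$ they are not all positive; hence exactly two of $a,b,c$ are negative. Because $m>2$ gives $m-2>0$, those two negative values are strictly below $m-2$, so at most one of $a,b,c$ exceeds $m-2$; the eigenvalue $m-2$ itself does not exceed $m-2$, and each of the $2m-5$ copies of $-1$ is below $m-2$. Thus at most one eigenvalue of $A(B_{m,m-1})$ is strictly greater than $m-2$. On the other hand, Lemma~\ref{bounds of lambda_1} gives $\lambda_1\ge m-1>m-2$. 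Hence $\lambda_1$ is the \emph{unique} eigenvalue exceeding $m-2$, so $\lambda_2\le m-2$; and since $m-2$ is an eigenvalue distinct from $\lambda_1$, we get $\lambda_2\ge m-2$. Therefore $\lambda_2=m-2$.

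\textbf{Expected main obstacle.} Step 1 is mechanical and Step 2 is bookkeeping; the real point is Step 3, namely upgrading ``$m-2$ is an eigenvalue'' to ``$m-2=\lambda_2$'' by excluding any eigenvalue strictly between $m-2$ and $\lambda_1$. A crude interlacing argument is not enough: deleting the two bridge vertices leaves $K_{m-1}\sqcup K_{m-2}$, whose top eigenvalue is $m-2$, which via Cauchy interlacing only yields $\lambda_3\le m-2$, i.e. at most two eigenvalues above $m-2$. Closing the remaining gap is exactly what the sign analysis of $\{a,b,c\}$ (reading off the signs of $abc$ and $ab+bc+ca$) accomplishes. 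An equivalent route, if one prefers, is to observe that the four-class partition used in Step 1 is equitable, so the four non-$(-1)$ eigenvalues are precisely the eigenvalues of the associated $4\times4$ quotient matrix, and then factor out $\lambda-(m-2)$ and analyze the resulting cubic; this is the same computation organized around a $4\times4$ determinant.
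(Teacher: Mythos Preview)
Your argument is correct. Step~1 is in substance the same as the paper's: the paper shows $m-2$ is an eigenvalue by exhibiting the row dependence
\[
\frac{1}{m-1}\sum_{i=1}^{m-1}[(m-2)I-A]_i-\sum_{j=m+1}^{2m-2}[(m-2)I-A]_j=[(m-2)I-A]_{2m-1},
\]
whose coefficient vector, scaled by $m-1$, is exactly your eigenvector $x$.

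The difference is in how $m-2$ is placed as $\lambda_2$. The paper uses only the trace identity $\sum_{i=1}^{4}\lambda_i^{*}=2m-5$ together with Perron--Frobenius: if some $\lambda'$ lay strictly between $m-2$ and $\lambda_1$, then $\lambda_1+\lambda'+(m-2)>3m-5$, forcing $\lambda_4^{*}<-m\le -\lambda_1$, a contradiction. You instead compute all three elementary symmetric functions of the residual triple $\{a,b,c\}$ from $\operatorname{tr}A$, $\operatorname{tr}A^2$, and $\det A$, and then read off from the signs of $abc$ and $ab+bc+ca$ that exactly one of $a,b,c$ is positive, so at most one eigenvalue exceeds $m-2$. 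Your route avoids Perron--Frobenius entirely and, as a bonus, already produces the cubic $x^{3}+(3-m)x^{2}+(2-2m)x-2$ whose factorization the paper establishes only in the subsequent theorem; the paper's route is shorter on computation but leans on Lemma~\ref{PF-lemma}. Either way the conclusion is the same, and your equitable-partition remark is a legitimate third packaging of the same information.
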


\begin{proof}
In this proof, we will use $[(m-2)I-A(B_{m,m-1})]_i$ to represent the $i$th row of the matrix $(m-2)I-A(B_{m,m-1})$. First, we note
\begin{eqnarray*}
\frac{1}{m-1}\sum_{i=1}^{m-1} [(m-2)I-A(B_{m,m-1})]_i - \sum_{j=m+1}^{m+n-1}[(m-2)I-A(B_{m,m-1})]_j 
\\
= [(m-2)I-A(B_{m,m-1})]_{m+n},
\end{eqnarray*}
which shows that the rows of $(m-2)I-A(B_{m,m-1})$ are not linearly independent. Thus, we know $\det{(m-2)I-A(B_{m,m-1})}=0$, so we can deduce that $(m-2)$ is an eigenvalue $\lambda$ of $A(B_{m,m-1})$. We also observe $\lambda = m-2 < m-1 \leq \lambda_1$, so we know that $\lambda$ cannot be the greatest eigenvalue. Because $m-2>-1$, we know at least that $\lambda^*_2 = \lambda_2$. Hence, $\lambda$ is either $\lambda_2$, $\lambda^*_3$, or $\lambda^*_4$.
\\
\\
Assume $\lambda$ is not $\lambda_2$. Then there is another $\lambda' = \lambda_2$ such that $m-2 < \lambda' < \lambda_1$. We know $\sum_{i=1}^4 \lambda^*_i = 2m-5$, and
\[
m-1+2(m-2)+\lambda^*_4 = 3m-5 + \lambda^*_4 < \sum_{i=1}^4 \lambda^*_i = 2m-5,
\]
so we know $\lambda_4<-m$. This cannot be, though, because the absolute value of the most negative eigenvalue must be less than $|\lambda_1|$ by Lemma \ref{PF-lemma}. Thus, we have a contradiction, and $\lambda = \lambda_2$.
\end{proof}

\begin{corollaryN}
\label{bounds of lambda_3 and lambda_4 B_m,m-1}
If $m>2$, then $\lambda_3$ and $\lambda_4$ of $A(B_{m,m-1})$ satisfy the following inequalities: \begin{enumerate}
    \item $-3 \leq \lambda^*_3 + \lambda^*_4 \leq -2$
    \item $\frac{2}{m} \leq \lambda^*_3 \lambda^*_4 \leq \frac{2}{m-1}$.
\end{enumerate} 
\end{corollaryN}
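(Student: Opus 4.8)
The plan is to combine the two elementary symmetric function identities for the four ``non-trivial'' eigenvalues $\lambda^*_1,\dots,\lambda^*_4$ of $A(B_{m,m-1})$ (the ones that are not equal to $-1$) with the information we have already extracted about $\lambda^*_1$ and $\lambda^*_2$. Recall from the discussion following Lemma \ref{multiplicity of -1 eigenvalues for A(B_m,n)} that $\sum_{i=1}^4 \lambda^*_i = m+n-4$ and $\prod_{i=1}^4 \lambda^*_i = m+n-3$; specializing to $n = m-1$ gives $\lambda^*_1 + \lambda^*_2 + \lambda^*_3 + \lambda^*_4 = 2m-5$ and $\lambda^*_1 \lambda^*_2 \lambda^*_3 \lambda^*_4 = 2m-4$. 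By Lemma \ref{lambda_2 of case n=m-1} we know $\lambda^*_2 = \lambda_2 = m-2$, and $\lambda^*_1 = \lambda_1$ since the largest eigenvalue is certainly one of the four that differ from $-1$.

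First I would solve for the two target quantities purely in terms of $\lambda_1$. From the sum identity, $\lambda^*_3 + \lambda^*_4 = (2m-5) - \lambda_1 - (m-2) = m - 3 - \lambda_1$. From the product identity, using $m>2$ to cancel the factor $m-2$ (and $\lambda_1 \ge m-1 > 0$ so that division is legitimate), $\lambda^*_3 \lambda^*_4 = \frac{2m-4}{\lambda_1 (m-2)} = \frac{2}{\lambda_1}$. Now I would invoke the bounds $m-1 \le \lambda_1 \le m$ from Lemma \ref{bounds of lambda_1}: since $m-3-\lambda_1$ is decreasing in $\lambda_1$, it lies between $m-3-m = -3$ and $m-3-(m-1) = -2$, which is (i); and since $2/\lambda_1$ is decreasing in $\lambda_1$, it lies between $2/m$ and $2/(m-1)$, which is (ii).

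There is essentially no obstacle here: once Lemmas \ref{bounds of lambda_1} and \ref{lambda_2 of case n=m-1} are available, the corollary reduces to two substitutions and an elementary monotonicity observation. The only points worth flagging in the write-up are that the hypothesis $m>2$ is exactly what makes the cancellation of $m-2$ in the product computation valid, and that the four displayed inequalities are sharp precisely at the endpoints $\lambda_1 = m-1$ and $\lambda_1 = m$ of the interval furnished by Lemma \ref{bounds of lambda_1}.
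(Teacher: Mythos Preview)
Your argument is correct and is exactly the approach the paper intends: it invokes Lemmas \ref{bounds of lambda_1} and \ref{lambda_2 of case n=m-1} together with the sum and product identities for the $\lambda^*_i$, solves for $\lambda^*_3+\lambda^*_4$ and $\lambda^*_3\lambda^*_4$ in terms of $\lambda_1$, and reads off the bounds. Your write-up simply makes explicit the substitutions the paper leaves implicit.
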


\begin{proof}
This follows from Lemmas \ref{bounds of lambda_1}, \ref{lambda_2 of case n=m-1}, and the equations for the sums and products of the eigenvalues.
\end{proof}

Now, we are prepared to determine the characteristic polynomial and spectrum of $B_{m,m-1}$. For the sake of simplicity, we will let $\mu_1$, $\mu_2$, and $\mu_3$ represent the remaining unknown eigenvalues (corresponding to $\lambda_1, \lambda^*_3,$ and $\lambda^*_4$). 

\begin{theoremN}
If $m>2$,the characteristic polynomial of $B_{m,m-1}$ is
\begin{equation}
\label{characteristic polynomial equation for B_m,m-1}
    \phi(A(B_{m,m-1}),x) = (x^3+(3-m)x^2 + (2-2m)x-2)(x-(m-2))(x+1)^{2m-5}.
\end{equation}
\end{theoremN}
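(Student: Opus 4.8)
The plan is to exhibit the obvious factors of the characteristic polynomial and then recover the remaining cubic factor from elementary symmetric functions of its roots. Setting $n = m-1$, so that $m+n = 2m-1 > 4$ since $m > 2$, Lemma \ref{multiplicity of -1 eigenvalues for A(B_m,n)} shows that $-1$ is an eigenvalue of $A(B_{m,m-1})$ of multiplicity exactly $2m-5$, and Lemma \ref{lambda_2 of case n=m-1} shows that $m-2$ is also an eigenvalue; since $m-2 \geq 1 > -1$ these are distinct, so there remain exactly three further eigenvalues $\mu_1,\mu_2,\mu_3$ (counted with multiplicity) and
\[
\phi(A(B_{m,m-1}),x) = (x+1)^{2m-5}\,(x-(m-2))\,\bigl(x^3 - e_1 x^2 + e_2 x - e_3\bigr),
\]
where $e_1,e_2,e_3$ are the elementary symmetric functions of $\mu_1,\mu_2,\mu_3$. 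It then suffices to compute $e_1 = m-3$, $e_2 = 2-2m$, and $e_3 = 2$, since substituting these into the cubic gives exactly $x^3+(3-m)x^2+(2-2m)x-2$.

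I would obtain these three quantities from the first two power sums of the spectrum together with the determinant. Since $\operatorname{tr} A(B_{m,m-1}) = 0$ equals the sum of all eigenvalues, subtracting the contributions $-(2m-5)$ from the $-1$ block and $m-2$ from the isolated factor gives $e_1 = (2m-5)-(m-2) = m-3$. Since $\det A(B_{m,m-1}) = (-1)^{2m-2}(3-(2m-1)) = 4-2m$ by Theorem \ref{det of adj matrix for complete bridge graphs}, and this equals the product of all eigenvalues $(-1)^{2m-5}(m-2)e_3 = -(m-2)e_3$, and $m\neq 2$, we get $e_3 = 2$. For $e_2$ I would use $\operatorname{tr} A(B_{m,m-1})^2 = 2\,|E(B_{m,m-1})|$; counting edges gives $|E(B_{m,m-1})| = \binom{m}{2}+\binom{m-1}{2}+1 = (m-1)^2+1$, so the sum of squares of all eigenvalues is $2(m-1)^2+2$, and subtracting the contributions $2m-5$ and $(m-2)^2$ yields $\mu_1^2+\mu_2^2+\mu_3^2 = m^2-2m+5$; then $e_2 = \tfrac12\bigl(e_1^2 - (m^2-2m+5)\bigr) = 2-2m$.

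There is no deep obstacle here, and the arithmetic is routine; the point worth stressing is that Lemma \ref{lambda_2 of case n=m-1} is what keeps the argument short. Without it we would know only three ``free'' moments (the trace, the trace of the square, and the determinant) yet face a degree-four unknown factor, and would additionally need the triangle count of $B_{m,m-1}$ (equivalently $\operatorname{tr} A^3$). Knowing $\lambda_2 = m-2$ in advance drops the unknown factor to a cubic, so exactly three moments suffice. One should also check in passing that $m-2$ is not a root of $x^3+(3-m)x^2+(2-2m)x-2$ — a one-line substitution shows its value there is $-((m-1)^2+1) \neq 0$ — which confirms both that the linear factor $x-(m-2)$ genuinely occurs with multiplicity one and that the claimed factorization is internally consistent; this is also in agreement with the strict separation $\lambda_2 = m-2 < \lambda_1$ coming from Lemma \ref{bounds of lambda_1} and Corollary \ref{bounds of lambda_3 and lambda_4 B_m,m-1}.
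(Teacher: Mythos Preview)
Your proof is correct and follows essentially the same route as the paper: both arguments peel off the factors $(x+1)^{2m-5}$ and $(x-(m-2))$ using Lemmas~\ref{multiplicity of -1 eigenvalues for A(B_m,n)} and~\ref{lambda_2 of case n=m-1}, and then recover the remaining cubic by computing the three elementary symmetric functions of its roots from the trace, the edge count, and the determinant. The only cosmetic difference is that the paper extracts $e_2$ directly from the identity $\sum_{i<j}\lambda_i\lambda_j = -e(B_{m,m-1})$, whereas you go through the power sum $\sum\lambda_i^2 = 2e(B_{m,m-1})$ and then Newton's identity; these are of course equivalent. Your closing verification that $m-2$ is not a root of the cubic is a tidy consistency check that the paper omits.
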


\begin{proof}
We know $(x-(m-2))$ and $(x+1)^{2m-5}$ divide $\phi(A(B_{m,m-1}),x)$ by Lemmas \ref{multiplicity of -1 eigenvalues for A(B_m,n)} and \ref{lambda_2 of case n=m-1}. Next, we will calculate $\sum_{i=1}^3 \mu_i$, $\sum_{1 \leq i < j \leq 3} \mu_i \mu_j$, and $\prod_{i=1}^3 \mu_i$, because 
\[\prod_{i=1}^3 (x-\mu_i) = x^3 - \left(\sum_{i=1}^3 \mu_i\right)x^2 + \left(\sum_{1 \leq i < j \leq 3} \mu_i \mu_j\right)x -  \prod_{i=1}^3 \mu_i. \]
If we determine this, then we have the entire characteristic polynomial
\\
\\
First, we know $\sum_{i=1}^n \lambda_i=0$. Also,
\[
\sum_{i=1}^n \lambda_i = \sum_{i=1}^3 \mu_i - (2m-5)+m-2=-m+3=0,
\]
so it follows that $\sum_{i=1}^3 \mu_i = m-3$. Next, we observe 
\[
\sum_{1\leq i < j \leq n}\lambda_i \lambda_j = \frac{1}{2}\left(\left(\sum_{i=1}^n \lambda_i\right)^2 - \sum_{i=1}^n \lambda_i^2\right) = \frac{1}{2}(0-2e(B_{m,m-1}) = -e(B_{m,m-1}),
\]
where $e(B_{m,m-1})$ represents the number of edges in the graph $B_{m,m-1}$, which is not a difficult quantity to determine. We find $e(B_{m,m-1}) = m^2-2m+2$, so
\[
\sum_{1 \leq i < j \leq n} \lambda_i \lambda_j = \sum_{1 \leq i < j \leq 3} \mu_i \mu_j+((m-2)-(2m-5))\sum_{i=1}^3 \mu_i -(m-2)(2m-5)\]\[+\frac{1}{2}(2m-6)(2m-5) = \sum_{1 \leq i < j \leq 3} \mu_i \mu_j-(m-3)^2 -(m-2)(2m-5)\]\[+\frac{1}{2}(2m-6)(2m-5) =  -m^2+2m-2,
\]
and from here, we can show $\sum_{1 \leq i < j \leq 3} \mu_i \mu_j = 2-2m$. We also know $\prod_{i=1}^n \lambda_i = (-1)^{2m-2}(3-(2m-1)) = (4-2m)$ from Theorem \ref{det of adj matrix for complete bridge graphs}, and
\[
\prod_{i=1}^n \lambda_i = \prod_{i=1}^3 \mu_i(-1)^{2m-5}(m-2) = 4-2m,
\]
so we determine $\prod_{i=1}^3 \mu_i =2$ because we know $m>2$ (otherwise, this quantity could be undefined). Thus, we see $\prod_{i=1}^3 (x-\mu_i) = (x^3+(3-m)x^2 + (2-2m)x-2)$, so $\prod_{i=1}^n (x-\lambda_i)$ is Equation \ref{characteristic polynomial equation for B_m,m-1}, and this is the characteristic polynomial of $A(B_{m,m-1})$. This completes the proof.
\end{proof}

We can also determine the spectrum of $B_{m,m-1}$. Let $\theta_1 \geq \theta_2 \geq \theta_3$ be the roots of $x^3+(3-m)x^2 + (2-2m)x-2$. 

\begin{theoremN}
If $m>2$, then $\theta_1 > m-2>0>\theta_2>-1>\theta_3$, and
\[
Spec(B_{m,m-1}) = \left(\begin{matrix}
\theta_1 & m-2 & \theta_2 & -1 & \theta_3\\
1 & 1 & 1 & 2m-5 & 1
\end{matrix}\right).
\]
\end{theoremN}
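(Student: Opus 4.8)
The plan is to identify the three roots $\theta_1,\theta_2,\theta_3$ of the cubic $p(x)=x^3+(3-m)x^2+(2-2m)x-2$ among the eigenvalues of $A(B_{m,m-1})$ and to locate them on the real line by sign-change (Intermediate Value Theorem) arguments on $p$, using the constraints already established in the previous sections. From the characteristic polynomial theorem we know the spectrum consists of $\theta_1,\theta_2,\theta_3$ together with $m-2$ (multiplicity $1$) and $-1$ (multiplicity $2m-5$); moreover from Lemma~\ref{bounds of lambda_1} and Lemma~\ref{lambda_2 of case n=m-1} we know $\lambda_1\ge m-1>m-2=\lambda_2$, and from Corollary~\ref{bounds of lambda_3 and lambda_4 B_m,m-1} we know $-3\le\lambda^*_3+\lambda^*_4\le-2$ and $\lambda^*_3\lambda^*_4>0$, so in particular $\lambda^*_3$ and $\lambda^*_4$ are both negative. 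The three unknown eigenvalues $\mu_1=\lambda_1,\mu_2=\lambda^*_3,\mu_3=\lambda^*_4$ are exactly the roots of $p$, so $\{\theta_1,\theta_2,\theta_3\}=\{\lambda_1,\lambda^*_3,\lambda^*_4\}$ as multisets.

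First I would show $\theta_1$ is a simple root with $\theta_1>m-2$: evaluate $p(m-2)$ and check it is negative (a short computation gives $p(m-2)=(m-2)^3+(3-m)(m-2)^2+(2-2m)(m-2)-2$, which simplifies to a negative quantity for $m>2$), while $p(x)\to+\infty$; since $\lambda_1\ge m-1>m-2$ is an eigenvalue and must be a root of $p$ (it is not $m-2$ and not $-1$), this forces the largest root $\theta_1=\lambda_1>m-2$. Next I would evaluate $p$ at $0$ and at $-1$: $p(0)=-2<0$ and $p(-1)=-1+(3-m)+(2m-5)-2\cdot(-1)\cdot\ldots$ — compute $p(-1)=-1+(3-m)-(2-2m)-2=m-2>0$ for $m>2$. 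So $p$ changes sign on $(-1,0)$, giving a root $\theta_2\in(-1,0)$, hence $0>\theta_2>-1$. Finally, since $p(-1)=m-2>0$ and $p(x)\to-\infty$ as $x\to-\infty$, there is a root strictly less than $-1$, which must be $\theta_3<-1$. This accounts for all three real roots of the cubic and establishes the ordering $\theta_1>m-2>0>\theta_2>-1>\theta_3$.

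With the ordering in hand, the spectrum follows by bookkeeping: the eigenvalue list is $\theta_1,m-2,\theta_2,-1,\theta_3$ with multiplicities $1,1,1,2m-5,1$ respectively, and the strict inequalities just proved show these five values are pairwise distinct (for $m>2$, in particular $m-2\neq\theta_1$ and $-1$ is strictly between $\theta_2$ and $\theta_3$ and distinct from $m-2$), so this is indeed $\mathrm{Spec}(B_{m,m-1})$ in decreasing order, matching the displayed matrix. The one point requiring care is confirming that $m-2$ genuinely sits strictly between $\theta_1$ and $\theta_2$ rather than coinciding with a root of $p$ or lying below $\theta_2$: coincidence is ruled out because $p(m-2)\neq 0$ (the sign computation shows $p(m-2)<0$), and the placement follows because $p(m-2)<0$ while $p$ is positive just past $\theta_1$ and negative on a neighborhood above $\theta_2$ — the sign pattern of the cubic pins $m-2$ into the interval $(\theta_2,\theta_1)$.

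I expect the only real obstacle to be the explicit sign evaluations $p(m-2)<0$ and $p(-1)=m-2>0$ — routine polynomial arithmetic, but it must be done carefully since the entire ordering rests on those two inequalities together with the already-proven bounds $\lambda_1\ge m-1$ and $\lambda^*_3,\lambda^*_4<0$. Everything else is immediate from the characteristic polynomial theorem and Perron--Frobenius.
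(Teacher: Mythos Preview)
Your approach is correct and takes a cleaner route than the paper's. The paper proceeds by case analysis using Corollary~\ref{bounds of lambda_3 and lambda_4 B_m,m-1}: from $\theta_2+\theta_3\in[-3,-2]$ and $\theta_2\theta_3\in[2/m,\,2/(m-1)]$ it rules out each of the configurations $\theta_2>\theta_3>-1$, $-1>\theta_2>\theta_3$, and the boundary cases $\theta_i=-1$ one at a time. Your Intermediate Value Theorem argument bypasses this entirely: the evaluations $p(m-2)=-\bigl((m-1)^2+1\bigr)<0$, $p(0)=-2<0$, and $p(-1)=m-2>0$, together with the end behavior of the cubic, produce three sign changes on the pairwise disjoint intervals $(-\infty,-1)$, $(-1,0)$, $(m-2,\infty)$, locating the three roots directly. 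This is shorter and in fact does not require Corollary~\ref{bounds of lambda_3 and lambda_4 B_m,m-1} at all --- you invoke it in your opening paragraph, but your second paragraph stands on its own. Two minor presentation points: your displayed computation of $p(-1)$ is garbled mid-line before you arrive at the correct value $m-2$, and your final paragraph is redundant once the three disjoint intervals are established, since $m-2>0>\theta_2$ and $\theta_1>m-2$ already place $m-2$ strictly between $\theta_2$ and $\theta_1$.
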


\begin{proof}
By Lemma \ref{bounds of lambda_1}, we know that the greatest eigenvalue must be at least $m-1$, so we know that this greatest eigenvalue is $\theta_1$ and $\theta_1> m-2$. By Corollary \ref{bounds of lambda_3 and lambda_4 B_m,m-1} (i), we see that at least $\theta_3$ is negative and by (ii), we know that $\theta_2$ and $\theta_3$ must both be negative. Assume $\theta_2 > \theta_3>-1$. Then $\theta_2+\theta_3 > -2$, which contradicts Corollary \ref{bounds of lambda_3 and lambda_4 B_m,m-1} (i). If $-1>\theta_2>\theta_3$, then $\theta_2 \theta_3 >1$, which contradicts Corollary \ref{bounds of lambda_3 and lambda_4 B_m,m-1} (ii). If we assume $\theta_2 = \theta_3 =-1$, then the only $m>2$ that satisfies Corollary \ref{bounds of lambda_3 and lambda_4 B_m,m-1} is $m=3$, and it is easy to determine that $-1$ is not a root of $x^3-4x-2$. If $\theta_2>\theta_3=-1$, then (i) is not satisfied, and if $\theta_2= -1>\theta_3$, then (ii) is not satisfied (of Corollary \ref{bounds of lambda_3 and lambda_4 B_m,m-1}). Thus, by this casework, we know $0>\theta_2>-1>\theta_3$, which completes the proof.
\end{proof}

\end{section}

\begin{section}{The Spectrum of a Special Class of Reseminant Graphs}

As in Section 3, we will begin this section by determining the multiplicity of -1 as an eigenvalue for any reseminant graph. Recall from the Introduction that $\mathcal{R}$ denotes the set of reseminant graphs.

\begin{lemmaN}
\label{-1 eigenvalue reseminant graphs}
If $\Gamma \in \mathcal{R}$ and $|V(\Gamma)|>5$, then $\Gamma$ has -1 as an eigenvalue of multiplicity $|V(\Gamma)|-5$. If $\Gamma = C_5$, then $\Gamma$ does not have -1 as an eigenvalue.
\end{lemmaN}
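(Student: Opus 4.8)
The plan is to control the single quantity $\operatorname{rank}(I + A(\Gamma))$ as $\Gamma$ is built up, and then read off the multiplicity of $-1$ from Lemma~\ref{eigenvalue multiplicity}. Write $\Gamma = G_k$, where $G_0 = C_5$ and each $G_{i+1}$ is obtained from $G_i$ by a single vertex duplication, so that $|V(\Gamma)| = 5 + k$. The base case handles the $C_5$ statement on its own: the adjacency eigenvalues of $C_5$ are $2\cos(2\pi j/5)$ for $j = 0,\dots,4$, none of which is $-1$, so $I + A(C_5)$ is nonsingular, $\operatorname{rank}(I + A(C_5)) = 5$, and Lemma~\ref{eigenvalue multiplicity} gives multiplicity $5 - 5 = 0$.

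For the inductive step, suppose $G_{i+1}$ arises from $G_i$ by duplicating a vertex $u$, with new vertex $v$. First I would record two structural observations: duplication adds no edges among the old vertices, so the principal submatrix of $I + A(G_{i+1})$ indexed by $V(G_i)$ is exactly $I + A(G_i)$; and since $N_{G_{i+1}}(v) = \{u\} \cup N_{G_i}(u)$ while $N_{G_{i+1}}(u) = N_{G_i}(u) \cup \{v\}$, the rows of $I + A(G_{i+1})$ indexed by $u$ and by $v$ are identical — each has a $1$ exactly in the positions $\{u,v\} \cup N_{G_i}(u)$ — and likewise the columns indexed by $u$ and $v$ coincide. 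Deleting row $v$ therefore does not change the row space, and the resulting matrix is $I + A(G_i)$ with one extra column appended that merely repeats column $u$; hence $\operatorname{rank}(I + A(G_{i+1})) = \operatorname{rank}(I + A(G_i))$. Iterating from the base case gives $\operatorname{rank}(I + A(\Gamma)) = 5$, and Lemma~\ref{eigenvalue multiplicity} yields that $-1$ is an eigenvalue of $\Gamma$ with multiplicity $|V(\Gamma)| - 5$.

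It is worth noting the eigenvector interpretation of the same computation, which also gives the lower bound directly: the equality of rows $u$ and $v$ above says exactly that $u$ and $v$ are true twins (equal closed neighborhoods) in $G_{i+1}$; a short case check shows the true-twin relation of a pair survives every subsequent duplication, so in $\Gamma$ each newly added vertex $v_j$ remains a true twin of the vertex $w_j$ it was copied from. For a true-twin pair, the difference $e_{v_j} - e_{w_j}$ of standard basis vectors is an eigenvector of $A(\Gamma)$ for eigenvalue $-1$, and the $k$ such vectors are linearly independent — peel them off in reverse order of creation, since $v_j$ appears in no later difference — giving multiplicity $\ge |V(\Gamma)| - 5$.

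I do not expect a genuine obstacle; the one place that needs care is the rank bookkeeping in the inductive step, namely checking that bordering $I + A(G_i)$ by the new row \emph{and} the new column leaves the rank unchanged rather than raising it, which is precisely what the twin-row/twin-column identities guarantee.
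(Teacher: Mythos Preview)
Your argument is correct and follows essentially the same route as the paper: both show $\operatorname{rank}(I+A(\Gamma))=5$ by observing that a duplicated vertex and its original have identical rows in $I+A$ (true twins), and then invoke Lemma~\ref{eigenvalue multiplicity}. The paper phrases this globally --- the vertices fall into five twin classes indexed by the $C_5$ vertices, giving rank~$5$ --- whereas you run an induction on duplications and verify the $C_5$ base case explicitly; your version is a bit more careful but the idea is the same.
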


\begin{proof}
Consider a reseminant graph $\Gamma$. A graph theoretic interpretation of the matrix $I+A(\Gamma)$ is that it is the graph $\Gamma$ with loops drawn at all of its vertices. In other words, each vertex is included in its own neighborhood. Because $\Gamma$ is reseminant, it is generated by duplicating (i.e. applying vertex duplication to) the vertices of the 5-cycle. Thus, in $I+A(\Gamma)$, we see that for any vertex $v$ in the induced 5-cycle of $\Gamma$, $v$ shares the same neighborhood as any vertex resulting from duplicating $v$. This partitions $\Gamma$ into five collections of vertices, and we find that the rows and columns corresponding to the vertices in each of these five sets are linearly dependent. Hence, rank$(I+A(\Gamma))=5$. Because rank$(I+A(\Gamma))=\textrm{rank}(-I-A(\Gamma))$, we deduce from Lemma \ref{eigenvalue multiplicity} that the multplicity of the -1 eigenvalue is $|V(\Gamma)|-5$.
\end{proof}
\end{section}

Before we can shift our focus to the special family of reseminant graphs, we must provide a new definition. First, $K^-_n$ is often used to denote a complete graph with an edge removed, and we shall adopt that notation.

\bigskip
\noindent
\textbf{Definition.}
An induced subgraph $\Gamma[\pi]$ of $\Gamma$ is a \textbf{maximal $K^-_n$ induced subgraph} of $\Gamma$ if $|\pi| = n$ and $\Gamma[\pi]$ is a $K^-_n$ induced subgraph of $\Gamma$ on the set $\pi$, where $\pi \subset V(\Gamma)$, but $\Gamma[\pi \cup \{v\}]$ is not a $K^-_{n+1}$ induced subgraph for any $v \in V(\Gamma)\,\setminus\, \pi$.

\bigskip
Now, we are prepared to start studying $\tilde{R}$, the family of reseminant graphs generated by repeatedly duplicating the same vertex of the induced 5-cycle. 

\begin{lemmaN}
\label{reseminant almost complete induced subgraph lemma}
If $\Gamma \in \mathcal{R}$, then $\Gamma \in \tilde{\mathcal{R}}$ if and only if $\Gamma$ has no more than one maximal $K^-_{i\geq4}$ induced subgraphs.
\end{lemmaN}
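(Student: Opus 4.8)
The plan is to exploit the structural description of reseminant graphs as blow-ups of $C_5$. As observed in the proof of Lemma \ref{-1 eigenvalue reseminant graphs} (and provable directly by induction on the number of duplications), any $\Gamma \in \mathcal{R}$ admits a partition $V(\Gamma) = Q_1 \cup \cdots \cup Q_5$ into five nonempty sets, indexed cyclically, such that each $Q_i$ is a clique, every vertex of $Q_i$ is joined to every vertex of $Q_{i-1}$ and of $Q_{i+1}$, and no vertex of $Q_i$ is joined to any vertex of $Q_{i-2}$ or $Q_{i+2}$; here $Q_i$ consists of the $i$-th cycle vertex together with all vertices obtained by duplicating it. Writing $a_i = |Q_i|$, duplicating a vertex lying in $Q_i$ simply enlarges $Q_i$ by one, so $\Gamma \in \tilde{\mathcal{R}}$ if and only if at most one $a_i$ exceeds $1$ (with $\Gamma = C_5$ being the case $a_1 = \cdots = a_5 = 1$).

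Next I would classify, inside such a $\Gamma$, the maximal $K^-_j$ induced subgraphs with $j \geq 4$. A $K^-_j$ has exactly one non-adjacent pair; since each $Q_i$ and each $Q_i \cup Q_{i+1}$ induces a clique, that pair must consist of one vertex $u \in Q_{i-1}$ and one vertex $w \in Q_{i+1}$ for some $i$, and the subgraph can contain no further vertex of $Q_{i-1}$ or $Q_{i+1}$ (a second vertex in either would create a second non-edge); hence its remaining $j - 2 \geq 2$ vertices form a set $W$, and each such vertex, being adjacent to both $u$ and $w$, must lie in $Q_i$. Conversely, any triple $(u, W, w)$ with $u \in Q_{i-1}$, $w \in Q_{i+1}$, $W \subseteq Q_i$, $|W| \geq 2$ induces $K^-_{|W|+2}$. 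A short closed-neighborhood computation shows that a vertex $x \notin \{u,w\} \cup W$ is adjacent to all of $\{u,w\} \cup W$ exactly when $x \in Q_i \setminus W$, so this $K^-$ extends to a $K^-_{j+1}$ if and only if $W \neq Q_i$. Therefore the maximal $K^-_{i \geq 4}$ induced subgraphs of $\Gamma$ are precisely the sets $\{u\} \cup Q_i \cup \{w\}$ with $a_i \geq 2$, $u \in Q_{i-1}$, $w \in Q_{i+1}$, and their number equals $\sum_{i \,:\, a_i \geq 2} a_{i-1} a_{i+1}$.

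With this formula the equivalence is immediate. If $\Gamma \in \tilde{\mathcal{R}}$ then either all $a_i = 1$, giving no such subgraph, or exactly one $a_i$ (say $a_1$) exceeds $1$ while $a_2 = a_5 = 1$, giving the count $a_5 a_2 = 1$; in both cases there is at most one. If $\Gamma \notin \tilde{\mathcal{R}}$, then at least two distinct blobs $Q_i, Q_j$ have size $\geq 2$, and since each has size $\geq 2$ while $Q_i \neq Q_j$, the corresponding sets $\{u\} \cup Q_i \cup \{w\}$ and $\{u'\} \cup Q_j \cup \{w'\}$ are distinct maximal $K^-_{\geq 4}$ induced subgraphs (equivalently, the sum above, each summand being at least $1$, is then at least $2$), so more than one exists. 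I expect the delicate step to be the middle one: pinning down exactly which vertex subsets induce a $K^-_j$ with $j \geq 4$ and which of those are maximal. This is a finite but somewhat fussy analysis inside the $C_5$-blow-up, requiring care with the smallest case $K^-_4 \cong C_4$ and with the verification that no outside vertex can be adjoined to a configuration already containing a full blob; once the count $\sum_{a_i \geq 2} a_{i-1} a_{i+1}$ is in hand, both directions of the equivalence drop out at once.
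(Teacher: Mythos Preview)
Your proof is correct and rests on the same structural insight as the paper's: the maximal $K^-_{j\geq 4}$ induced subgraphs of a reseminant graph are exactly the sets $\{u\}\cup Q_i\cup\{w\}$ with $|Q_i|\geq 2$, $u\in Q_{i-1}$, $w\in Q_{i+1}$. Where the paper, in the reverse direction, simply exhibits two such subgraphs $\Gamma[V^*\cup\{v^*,v^*_1,v^*_2\}]$ and $\Gamma[V'\cup\{v',v'_1,v'_2\}]$ and checks maximality for each, you push the analysis one step further to a full classification and the counting formula $\sum_{i:\,a_i\geq 2}a_{i-1}a_{i+1}$. This buys you a cleaner treatment of both directions at once: the forward direction, which the paper dismisses as ``follows from the definition,'' is made explicit by your observation that if only $a_1>1$ then the count is $a_5a_2=1$; and in the reverse direction you avoid having to argue separately that the two exhibited subgraphs are distinct, since distinct indices $i$ with $a_i\geq 2$ automatically contribute distinct summands. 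The paper's argument is shorter because it stops at the minimum needed, but your version is tidier and more informative.
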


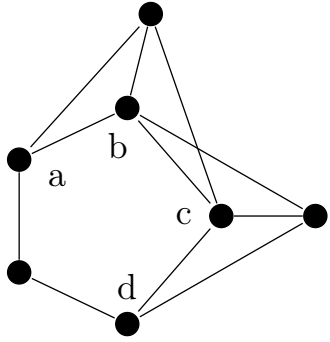
\begin{figure}
   \label{lemma 4.2 figure}
\centering
\scalebox{1.25}{
\begin{tikzpicture}
 
    \node (n16) at (11.5,8.1) {};
  \node (n17) at (11.5,6.9) {};
  \node (n18) at (12.65,8.65) {};
  \node (n19) at (12.65,6.35) {};
  \node (n20) at (13.65,7.5) {};
  \node (n21) at (14.65,7.5) {};
  \node (n22) at (12.9,9.65) {};
  
  \draw [fill=black] (11.5,8.1) circle (3.5pt);
\draw [fill=black] (11.5,6.9) circle (3.5pt);
\draw [fill=black] (12.65,8.65) circle (3.5pt);
\draw [fill=black] (12.65,6.35) circle (3.5 pt);
\draw [fill=black] (13.65,7.5) circle (3.5pt);
\draw [fill=black] (14.65,7.5) circle (3.5pt);
\draw [fill=black] (12.9,9.65) circle (3.5pt);

\node (n23) at (12.55,8.25) {b};
\node (n24) at (11.9,7.9) {a};
\node (n25) at (13.25,7.5) {c};
\node (n26) at (12.65,6.75) {d};

\foreach \from/\to in {n16/n17,n16/n18,n17/n19,n18/n20,n19/n20,n18/n21,n19/n21,n20/n21,n22/n16,n22/n18,n22/n20}
    \draw (\from) -- (\to);
\end{tikzpicture}}
\caption{The graph above is a reseminant graph on 7 vertices, where two distinct vertices are duplicated. Using the notation in the proof of Lemma \ref{reseminant almost complete induced subgraph lemma}, we can let $a=v^*_1$, $b = v^*=v'_1$, $c = v'=v^*_2$, and $d = v'_2$.}
\end{figure}

\begin{proof}
Let $\Gamma \in \mathcal{R}$. The forward direction follows from the definition of $\tilde{\mathcal{R}}$. Next, assume that $\Gamma$ has no more than one maximal $K^-_{i \geq 4}$ induced subgraph, but $\Gamma$ is not $C_5$ or some other reseminant graph generated by repeatedly duplicating the same vertex in the induced subgraph isomorphic to $C_5$. Thus, we know that two unique vertices $v^*$ and $v'$ on the 5-cycle were duplicated in the construction of $\Gamma$. Let $v^*_1$ and $v^*_2$ be the vertices in the induced 5-cycle of $\Gamma$ that are adjacent to $v^*$, and similarly define $v'_1$ and $v'_2$. Note that not all of these six vertices will be distinct because they are all on the induced 5-cycle and that $v^*$ and $v'$ might be adjacent. Refer to Figure 2 to help visualize how these vertices are labelled. Additionally, let $V^*$ be the set of vertices in $\Gamma$ that are the result of duplicating $v^*$, and define $V'$ similarly. We can say that some vertex $v$ is in $V^*$ if and only if the induced subgraph on $v, v^*, v^*_1,$ and $ v^*_2$ is $K^-_4$.
\\
\\
First, note that $\Gamma[V^* \cup \{v^*,v^*_1, v^*_2\}]$ and $\Gamma[V' \cup \{v',v'_1, v'_2\}]$ are $K^-_{|V^*|+3}$ and $K^-_{|V'|+3}$ induced subgraphs. By the method of vertex duplication, these are both almost complete subgraphs, except that they are missing the edges $v^*_1v^*_2$ and $v'_1v'_2$, respectively. We can also show that they are both maximal. Consider, for example, $\Gamma[V^* \cup \{v^*,v^*_1, v^*_2\}]$. We need to show that $\Gamma[V^* \cup \{v^*,v^*_1, v^*_2\} \cup \{\tilde{v}\}]$ is not some $K^-_{|V^*|+4}$ induced subgraph for any $\tilde{v} \in V(\Gamma_n) \setminus (V^* \cup \{v^*,v^*_1, v^*_2\})$. First of all, note that there is no vertex whose duplication results in the edge $v^*_1v^*_2$ because $d(v^*_1,v^*_2)=2$. Additionally, any vertex $\tilde{v} \in V(\Gamma_n) \setminus (V^* \cup \{v^*,v^*_1, v^*_2\})$ is a distance of 2 away from $v^*_1$ or $v^*_2$, which we can show as follows. If $d(v^*_1,\tilde{v})=1$, then $d(v^*_2,\tilde{v})\leq 2$ because the diameter of a reseminant graph is always 2 (as is easy to see). Assume $d(v^*_2,\tilde{v})=1$. Then, the vertices $v^*, v^*_1, v^*_2,$ and $\tilde{v}$ form a $K^-_4$ induced subgraph. However, this implies $\tilde{v} \in V^*$, which is a contradiction because we said $\tilde{v} \in V(\Gamma_n) \setminus (V^* \cup \{v^*,v^*_1, v^*_2\})$. Thus, $d(v^*_2,\tilde{v})=2$. Similarly, we can deduce that $d(v^*_1,\tilde{v})=2$ if $d(v^*_2,\tilde{v})=1$. We have found that $\Gamma[V^* \cup \{v^*,v^*_1, v^*_2\} \cup \{\tilde{v}\}]$ has at least two edges less than $K_{|V^*|+4}$, so it cannot be some $K^-_{|V^*|+4}$ induced subgraph. The same result can be found by considering $\Gamma[V' \cup \{v',v'_1, v'_2\}]$. Hence, we know that $\Gamma[V^* \cup \{v^*,v^*_1, v^*_2\}]$ and $\Gamma[V' \cup \{v',v'_1, v'_2\}]$ are both maximal $K^-_{i\geq4}$ induced subgraphs.
\\
\\
We have seen that if a reseminant graph can be generated by duplicating two distinct vertices in the induced subgraph isomorphic to $C_5$, then we have at least two $K^-_{i\geq 4}$ induced subgraphs. This also holds for repeatedly duplicating distinct vertices and for duplicating more than two vertices in the 5-cycle. This contradicts our original assumption. $C_5$ has no (maximal) $K^-_{i\geq 4}$ induced subgraphs, and it is not difficult to see that the reseminant graphs generated by repeatedly duplicating the same vertex in the induced 5-cycle have exactly one maximal $K^-_{i\geq 4}$ induced subgraph, so this establishes the lemma.
\end{proof}




Now, we have determined an important property of $\tilde{R}$ that does not apply to any other reseminant graphs. Also, we can consider the following corollary from Lemma \ref{reseminant almost complete induced subgraph lemma}, which is helpful for visualizing the reseminant graphs to which it applies. It will be stated without proof, as its truth should be obvious.

\begin{corollaryN}
\label{degree 2 corollary}
If $\Gamma \in \mathcal{R}$, then $\Gamma \in \tilde{\mathcal{R}}$ if and only if $\Gamma$ has at least two adjacent vertices of degree 2.
\end{corollaryN}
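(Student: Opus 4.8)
The plan is to unwind the ``blow-up'' structure of reseminant graphs that is already implicit in the proof of Lemma~\ref{-1 eigenvalue reseminant graphs}, and then to reduce the corollary to an elementary degree count; the real content is just in recording that structure carefully.

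First I would establish (by an easy induction on the number of vertex duplications) that every $\Gamma \in \mathcal{R}$ is a blow-up of $C_5$: its vertex set partitions into five nonempty classes $C_1,\dots,C_5$ — the sets of copies of the five vertices of the induced $5$-cycle, indexed cyclically — such that each $C_i$ induces a clique, and for $i \neq j$ all edges between $C_i$ and $C_j$ are present if $i \sim j$ in $C_5$ and none are present otherwise. The key point in this induction is that duplicating a parent in class $C_i$ attaches the new vertex to its parent \emph{and} to all of the parent's neighbours, which is exactly what keeps $C_i$ a clique and adds a vertex to $C_i$ without disturbing anything else. Writing $c_i = |C_i| \geq 1$, we then have $\Gamma \in \tilde{\mathcal{R}}$ precisely when at most one $c_i$ exceeds $1$ (all duplications happening inside a single class); equivalently, via Lemma~\ref{reseminant almost complete induced subgraph lemma}, the induced subgraph on $C_{i-1} \cup C_i \cup C_{i+1}$ is the only candidate for a maximal $K^-_{\geq 4}$ attached at class $i$, and it has order $\geq 4$ exactly when $c_{i-1}+c_i+c_{i+1} \geq 4$. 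Finally, from the structure a vertex of $C_i$ has degree $(c_i - 1) + c_{i-1} + c_{i+1}$, indices taken mod $5$.

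Given this, the forward direction is immediate: if $\Gamma \in \tilde{\mathcal{R}}$ and $c_k$ is the only class size that may exceed $1$, then every vertex of $C_{k+2}$ and of $C_{k+3}$ has degree $(1-1)+1+1 = 2$, and since $C_{k+2} \sim C_{k+3}$ in $C_5$ we obtain two adjacent vertices of degree $2$ (the case $\Gamma = C_5$ being just a pair of adjacent cycle vertices). For the converse, suppose $x \in C_i$ and $y \in C_j$ are adjacent with $\deg x = \deg y = 2$. Adjacency forces $j \in \{i-1, i+1\}$, say $j = i+1$ after a reflection. Since $c_{i-1}, c_{i+1} \geq 1$, the equation $\deg x = 2$ forces $c_{i-1} = c_i = c_{i+1} = 1$, and then $\deg y = 2$ forces $c_{i+2} = 1$ as well; hence the only class that can have size $> 1$ is $C_{i+3}$, whence $\Gamma \in \tilde{\mathcal{R}}$.

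The main obstacle — and the only step that is not a one-line computation — is the inductive verification that reseminant graphs are exactly the $C_5$-blow-ups and that ``repeatedly duplicating the same vertex'' corresponds to ``at most one nontrivial class.'' This is routine but requires being attentive to the precise definition of vertex duplication; once it is in place, the degree arithmetic in both directions is completely mechanical. Alternatively, one can bypass the explicit blow-up description and phrase everything directly in terms of maximal $K^-_{\geq 4}$ induced subgraphs and Lemma~\ref{reseminant almost complete induced subgraph lemma}, but this amounts to the same class-size bookkeeping.
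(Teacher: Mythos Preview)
Your argument is correct. The paper itself offers no proof of this corollary, remarking only that ``its truth should be obvious,'' so there is no paper argument to compare against; what you have done is make explicit the blow-up description of reseminant graphs that the paper uses only implicitly (in the proof of Lemma~\ref{-1 eigenvalue reseminant graphs}) and then read off the degree condition from the resulting formula $\deg(v) = (c_i-1)+c_{i-1}+c_{i+1}$ for $v \in C_i$. This is the natural way to justify the corollary rigorously, and your closing remark that the same bookkeeping could instead be phrased through Lemma~\ref{reseminant almost complete induced subgraph lemma} is also accurate --- that is presumably the route the paper has in mind when it calls the result obvious.

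One small omission in the converse: when you write ``adjacency forces $j \in \{i-1,\, i+1\}$,'' you have silently excluded the case $j = i$ (two adjacent vertices in the same clique class). This is harmless --- if $c_i \geq 2$ then any vertex of $C_i$ already has degree at least $(2-1)+1+1 = 3$, so a degree-$2$ vertex must sit in a singleton class --- but it deserves a half-sentence so the case split is visibly exhaustive.
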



We can also establish that there is a unique graph in $\tilde{\mathcal{R}}$ of $n \geq 5$ vertices (up to isomorphism).

\begin{lemmaN}
For each $n \geq 5$, there is a unique graph in $\tilde{\mathcal{R}}$ with $n$ vertices.
\end{lemmaN}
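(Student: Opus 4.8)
The plan is to exhibit the graph on $n$ vertices explicitly, check that it actually arises, and then observe that the recipe defining $\tilde{\mathcal R}$ is essentially deterministic, so that nothing else can occur.

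First I would fix notation for the candidate. Write the starting $C_5$ as a cycle $v_0\,a_1\,b_1\,b_2\,a_2$, and for $k\geq 0$ let $\tilde R_k$ be the graph with vertex set $Q\cup\{a_1,a_2,b_1,b_2\}$, where $Q$ is a clique of size $k+1$ containing $v_0$, every vertex of $Q$ is adjacent to both $a_1$ and $a_2$, and the only remaining edges among $\{a_1,a_2,b_1,b_2\}$ are $a_1b_1$, $b_1b_2$, $b_2a_2$; thus $\tilde R_0=C_5$ and $\tilde R_k$ has $k+5$ vertices. I claim the graph of $\tilde{\mathcal R}$ on $n$ vertices is $\tilde R_{n-5}$. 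For existence I would show by induction on $k$ that duplicating $v_0$ in $C_5$ a total of $k$ times produces $\tilde R_k$: the base case is immediate, and for the inductive step one notes that in $\tilde R_k$ every vertex of $Q$ has closed neighborhood $Q\cup\{a_1,a_2\}$, so the next duplication of $v_0$ inserts a new vertex with neighborhood $Q\cup\{a_1,a_2\}$ — that is, it merely enlarges $Q$ to a clique of size $k+2$ and changes nothing else — which is exactly $\tilde R_{k+1}$.

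For uniqueness, take any $\Gamma\in\tilde{\mathcal R}$ with $n$ vertices. If $n=5$ then $\Gamma=C_5$, the only reseminant graph on five vertices. If $n\geq 6$, then by definition $\Gamma$ is built from an initial induced $5$-cycle by $n-5$ successive duplications, all of one fixed vertex $v$ of that $5$-cycle. Since $C_5$ is vertex-transitive there is an isomorphism of the initial $5$-cycle onto $v_0\,a_1\,b_1\,b_2\,a_2$ carrying $v$ to $v_0$, and vertex duplication is functorial with respect to isomorphisms: if $\phi$ is an isomorphism and one duplicates a vertex $u$ on one side and $\phi(u)$ on the other, then $\phi$ extended by sending the new vertex to the new vertex is again an isomorphism. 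Propagating the initial isomorphism through all $n-5$ steps yields $\Gamma\cong\tilde R_{n-5}$, so the $n$-vertex member of $\tilde{\mathcal R}$ is unique up to isomorphism.

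The hard — or at least the only non-mechanical — part is making precise that ``repeatedly duplicating the same vertex of the induced $5$-cycle'' is a single deterministic process: the vertex being duplicated is pinned down at the start as a vertex of the original $C_5$; its closed neighborhood grows during the process, but at every stage it stays a genuine twin of each previously created vertex, so each step has a well-defined outcome that does not depend on which of these twins one selects; and the five possible initial choices in $C_5$ are permuted by $\Aut{C_5}$ and hence give isomorphic graphs. Once that is pinned down the rest is routine. (Alternatively one could prove uniqueness by induction on $n$ using Corollary~\ref{degree 2 corollary}: two adjacent degree-$2$ vertices pick out the $a_i$ and $b_i$, deleting a vertex of the remaining clique lands in $\tilde{\mathcal R}$ on $n-1$ vertices, and one reconstructs; but that route requires identifying the clique $Q$ inside $\Gamma$ first, which the argument above avoids.)
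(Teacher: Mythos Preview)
Your argument is correct. The paper does not actually prove this lemma; it simply remarks that ``the preceding lemma is intuitively obvious'' and immediately moves on to use it as a definition of $\tilde R_n$. Your proof supplies the details that the paper omits: the explicit model of $\tilde R_k$ as a $(k+1)$-clique glued to a path of length three via $a_1,a_2$, the inductive check that one more duplication of $v_0$ enlarges the clique by one, and the uniqueness via vertex-transitivity of $C_5$ together with the evident functoriality of vertex duplication under isomorphism. The observation that all vertices of the clique $Q$ are true twins (same closed neighborhood), so that it is immaterial which of them is duplicated at any step, is exactly the point that makes ``repeatedly duplicating the same vertex'' well-defined and deterministic; this is the only place where a reader might otherwise worry, and you handle it cleanly.
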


The preceding lemma is intuitively obvious, and it enables us to establish some helpful notation.

\bigskip
\noindent
\textbf{Definition.}
Let $\tilde{R}_n$ represent the unique graph in $\tilde{\mathcal{R}}$ on $n+5$ vertices.

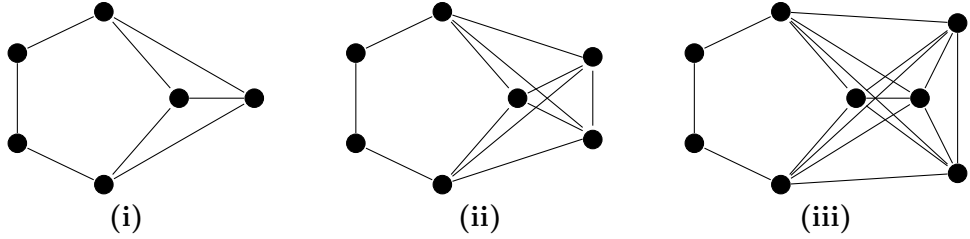
\begin{figure}
   \label{lemma 4.2 figure}
\centering
\scalebox{1.0}{
\begin{tikzpicture}
 
\node (n16) at (11.5,8.1) {};
\node (n17) at (11.5,6.9) {};
\node (n18) at (12.65,8.65) {};
\node (n19) at (12.65,6.35) {};
\node (n20) at (13.65,7.5) {};
\node (n21) at (14.65,7.5) {};
\node (n22) at (16,8.1) {};
\node (n23) at (16,6.9) {};
\node (n24) at (17.15,8.65) {};
\node (n25) at (17.15,6.35) {};
\node (n26) at (18.15,7.5) {};
\node (n27) at (19.15, 8) {};
\node (n28) at (19.15,7) {};
\node (n29) at (20.5,8.1) {};
\node (n30) at (20.5,6.9) {};
\node (n31) at (21.65,8.65) {};
\node (n32) at (21.65,6.35) {};
\node (n33) at (22.65,7.5) {};
\node (n34) at (23.5,7.5) {};
\node (n35) at (24,8.5) {};
\node (n36) at (24,6.5) {};
  
  \draw [fill=black] (11.5,8.1) circle (3.5pt);
\draw [fill=black] (11.5,6.9) circle (3.5pt);
\draw [fill=black] (12.65,8.65) circle (3.5pt);
\draw [fill=black] (12.65,6.35) circle (3.5 pt);
\draw [fill=black] (13.65,7.5) circle (3.5pt);
\draw [fill=black] (14.65,7.5) circle (3.5pt);
 \draw [fill=black] (16,8.1) circle (3.5pt);
\draw [fill=black] (16,6.9) circle (3.5pt);
\draw [fill=black] (17.15,8.65) circle (3.5pt);
\draw [fill=black] (17.15,6.35) circle (3.5 pt);
\draw [fill=black] (18.15,7.5) circle (3.5pt);
\draw [fill=black] (19.15,8.05) circle (3.5pt);
\draw [fill=black] (19.15,6.95) circle (3.5pt);
\draw [fill=black] (20.5,8.1) circle (3.5pt);
\draw [fill=black] (20.5,6.9) circle (3.5pt);
\draw [fill=black] (21.65,8.65) circle (3.5pt);
\draw [fill=black] (21.65,6.35) circle (3.5 pt);
\draw [fill=black] (22.65,7.5) circle (3.5pt);
\draw [fill=black] (23.5,7.5) circle (3.5pt);
\draw [fill=black] (24,8.5) circle (3.5pt);
\draw [fill=black] (24,6.5) circle (3.5pt);

\node at (12.95,5.9) {\textbf{(i)}} {};
\node at (17.65,5.9) {\textbf{(ii)}} {};
\node at (22.25,5.9) {\textbf{(iii)}} {};

\foreach \from/\to in {n16/n17,n16/n18,n17/n19,n18/n20,n19/n20,n18/n21,n19/n21,n20/n21,n22/n23,n22/n24,n23/n25,n24/n26,n25/n26,n27/n28,n26/n27,n26/n28,n24/n27,n24/n28,n25/n27,n25/n28,n29/n30,n29/n31,n30/n32,n33/n31,n33/n32,n33/n34,n34/n35,n34/n36,n35/n36,n34/n32,n34/n31,n35/n33,n35/n32,n35/n31,n36/n33,n36/n32,n36/n31}
    \draw (\from) -- (\to);
\end{tikzpicture}}
\caption{Above are examples of the graphs $\tilde{R}_1$, $\tilde{R}_2$, and $\tilde{R}_3$ (as (i), (ii), and (iii), respectively). Note how they all contain two vertices of degree 2 and one $K^-_{n+3}$ induced subgraph.}
\end{figure}

\bigskip
Before we proceed to study the eigenvalues of graphs in $\tilde{R}$, we will make the following connection between $\tilde{R}_n$ and suspension graphs.

\begin{lemmaN}
\label{R_n isomorphic S(n-3,2)}
$\tilde{R}_n \cong S(n+2,2)$.
\end{lemmaN}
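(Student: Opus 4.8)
The plan is to exhibit an explicit isomorphism by identifying the vertex set of $\tilde R_n$ with that of $S(n+2,2)$ and checking that edges correspond. First I would recall the two descriptions. On one side, $S(n+2,2)$ is built from the complete bridge graph $B_{n+2,2}$ by adjoining one new vertex $w$ adjacent to every vertex of $B_{n+2,2}$ except the two bridge vertices; writing the $K_{n+2}$ side as $\{u_1,\dots,u_{n+1},b_1\}$ with bridge vertex $b_1$, and the $K_2$ side as $\{b_2,t\}$ with bridge vertex $b_2$, the vertices $u_1,\dots,u_{n+1},t$ are joined to $w$. On the other side, $\tilde R_n$ is $C_5$ with one fixed vertex duplicated $n$ times. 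Label the $5$-cycle $x_1x_2x_3x_4x_5$ and say $x_1$ is the repeatedly duplicated vertex, producing duplicates $x_1^{(1)},\dots,x_1^{(n)}$; by the duplication rule each $x_1^{(k)}$ is adjacent to $x_1$ and to $N(x_1)=\{x_2,x_5\}$, and the $x_1^{(k)}$ together with $x_1$ form a clique (each duplication of $x_1$ adds the edge to the previously created copies as well, since those copies lie in the successively updated neighbourhood of $x_1$).

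Next I would write down the candidate bijection. The set $\{x_1,x_1^{(1)},\dots,x_1^{(n)},x_2,x_5\}$ induces $K_{n+3}^-$ (complete except for the missing edge $x_2x_5$), so the natural thing is to send $\{x_1,x_1^{(1)},\dots,x_1^{(n)},x_2\}$ — an $(n+2)$-clique — to the $K_{n+2}$ side of $B_{n+2,2}$, with $x_2\mapsto b_1$ playing the role of the bridge vertex, $x_5\mapsto b_2$ the other bridge vertex, and the remaining cycle vertices $x_4,x_3$ mapping to $\{b_2,t\}$ and $w$ in some order. Concretely I would try $x_1\mapsto$ one of the $u_i$, the duplicates $x_1^{(k)}\mapsto$ the other $u_i$'s, $x_2\mapsto b_1$, $x_5\mapsto b_2$, $x_4\mapsto t$, and $x_3\mapsto w$. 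Then I would verify adjacencies both ways: the bridge edge $b_1b_2$ corresponds to $x_2x_5$ — wait, that edge is \emph{absent}; so instead I should take the bridge edge to be $x_4x_5$, i.e. set $x_5\mapsto b_1$ (bridge vertex on the $K_{n+2}$ side) — but $x_5$ is not in the $(n+2)$-clique. This tension is exactly the crux, so let me reorganize: since $b_1$ must be both the bridge endpoint and a member of the $K_{n+2}$ clique, and the only $(n+2)$-clique in $\tilde R_n$ is $\{x_1,x_1^{(k)},x_2\}$ or $\{x_1,x_1^{(k)},x_5\}$, the bridge vertex is $x_2$ (or symmetrically $x_5$), and the bridge edge must join $x_2$ to a vertex outside that clique that has no other neighbour in it — namely $x_3$. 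So $x_3\mapsto b_2$, the $K_2$ on the small side is $\{x_3,x_4\}$ with $x_4\mapsto t$, and the suspension vertex is $w\mapsto x_5$: one checks $x_5$ is adjacent to $x_1$, all $x_1^{(k)}$, and $x_4$, but not to $x_2$ or $x_3$, which is precisely the suspension-vertex adjacency pattern (adjacent to everything in $B_{n+2,2}$ except the two bridge vertices $x_2,x_3$).

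Having fixed the bijection $\phi$ with $x_1,x_1^{(1)},\dots,x_1^{(n)}\mapsto u_1,\dots,u_{n+1}$, $x_2\mapsto b_1$, $x_3\mapsto b_2$, $x_4\mapsto t$, $x_5\mapsto w$, the verification is then a finite, essentially mechanical edge-by-edge check: the clique $\{x_1,x_1^{(k)},x_2\}$ maps onto the $K_{n+2}$; $\{x_3,x_4\}$ maps onto the $K_2$; the bridge edge $x_2x_3$ maps onto $b_1b_2$; the edges from $x_5$ to $x_1$, the $x_1^{(k)}$, and $x_4$ map onto the suspension edges; and there are no other edges on either side (in particular $x_2x_4$, $x_2x_5$, $x_3x_5$, $x_3 x_1^{(k)}$, $x_4 x_1^{(k)}$ are all non-edges, matching $b_1 t$, $b_1 w$, $b_2 w$, $b_2 u_i$, $t u_i$ being non-edges in $S(n+2,2)$). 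I expect the only real obstacle to be bookkeeping: making sure the edges created among the duplicates $x_1^{(k)}$ during successive duplications are all present (so that $\{x_1\}\cup\{x_1^{(k)}\}\cup\{x_2\}$ is genuinely a clique of size $n+2$, not merely a star), and that $x_5$ indeed acquires edges to every $x_1^{(k)}$ under the duplication rule. Once those are nailed down, degree counts give a quick sanity check — $\tilde R_n$ has exactly two adjacent degree-$2$ vertices by Corollary \ref{degree 2 corollary}, namely $x_3$ and $x_4$, which correspond to the two bridge-adjacent low-degree vertices $b_2$ and $t$ of $S(n+2,2)$ — confirming the labels are consistent and completing the proof.
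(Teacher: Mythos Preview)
Your proposal is correct and takes essentially the same approach as the paper: both exhibit an explicit bijection by matching the two adjacent degree-$2$ vertices of $\tilde R_n$ to the two degree-$2$ vertices of $S(n+2,2)$, their high-degree neighbours to the bridge vertex $b_1$ and the suspension vertex $w$, and the remaining clique to the non-bridge vertices of $K_{n+2}$. Your final map $x_1,x_1^{(k)}\mapsto u_i$, $x_2\mapsto b_1$, $x_3\mapsto b_2$, $x_4\mapsto t$, $x_5\mapsto w$ is exactly the paper's isomorphism in different notation, and you in fact carry out the edge-by-edge check more explicitly than the paper does (the paper simply asserts the map is an isomorphism after setting up the labels).
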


\begin{proof}
First, we know $\tilde{R}_0$ is $C_5$, which is isomorphic to $S(2,2)$. Next, suppose $n>5$. We will prove the lemma by providing an explicit isomorphism. Consider the function $\phi: V(\tilde{R}_n) \rightarrow V(S(n+2,2))$. Let $a_1, a_2 \in V(\tilde{R}_n)$ be the two vertices of degree 2, which we know exist by Corollary \ref{degree 2 corollary}. Let $a_3,a_4 \in V(\tilde{R}_n)$ be the vertices adjacent to $a_1$ and $a_2$ of degree $>2$, respectively. Let $a_i$, where $5\leq i \leq n$ be the remaining vertices in $V(\tilde{R}_n)$. Thus, we find $\tilde{R}_n[a_3,a_4,a_{5\leq i \leq n+5}]$ is the single maximal $K^-_{n+3}$ induced subgraph, where $a_3a_4$ is the missing edge. Next, label the two vertices of degree 2 in $V(S(n+2,2))$ as $a_1'$ and $a_2'$. These are the two vertices in the $K_2$ subgraph in $B_{n+2,2}$. Next, let $a_3'$ be the vertex of degree $>2$ adjacent to $a_1'$ and similarly define $a_4'$, with respect to $a_2'$. Finally, label the remaining vertices $a_i'$, where $5\leq i \leq n+5$. Note that these are the vertices in the $K_{n+2}$ subgraph of $B_{n+2,2}$, excluding the bridge vertex in $K_{n-3}$. We assert $\phi(a_j) = a_j'$ is a graph isomorphism (regardless of how we label the vertices $a_i$ and $a'_i$ where $5 \leq i \leq n+5$), which establishes the lemma.
\end{proof}

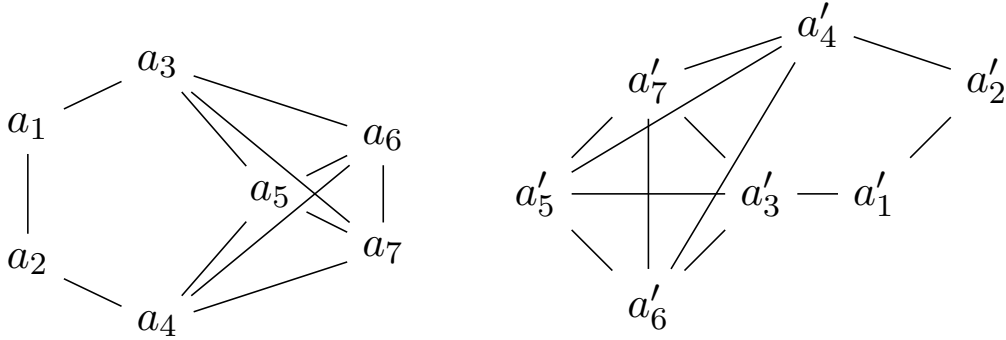
\begin{figure}
    \centering
    \scalebox{1.5}{
 \begin{tikzpicture}   
\node (n8) at (21.5,8.5) {$a'_7$};
  \node (n9) at (22.5,7.5)  {$a'_3$};
  \node (n10) at (23.5,7.5)  {$a'_1$};
  \node (n11) at (24.5,8.5) {$a'_2$};
  \node (n13) at (21.5,6.5)  {$a'_6$};
  \node (n14) at (20.5,7.5) {$a'_5$};
  \node (n15) at (23,9) {$a'_4$};
  \node (n22) at (16,8.1) {$a_1$};
\node (n23) at (16,6.9) {$a_2$};
\node (n24) at (17.15,8.65) {$a_3$};
\node (n25) at (17.15,6.35) {$a_4$};
\node (n26) at (18.15,7.5) {$a_5$};
\node (n27) at (19.15, 8) {$a_6$};
\node (n28) at (19.15,7) {$a_7$};


\foreach \from/\to in {n22/n23,n22/n24,n23/n25,n24/n26,n25/n26,n27/n28,n26/n27,n26/n28,n24/n27,n24/n28,n25/n27,n25/n28,n14/n8,n14/n13,n14/n9,n9/n8,n9/n13,n8/n13,n9/n10,n10/n11,n15/n8,n15/n11,n15/n13,n15/n14}
    \draw (\from) -- (\to);
   
   \end{tikzpicture}}
    \caption{An example of one of the isomorphisms between $\tilde{R}_2$ and $S(4,2)$.}
    \label{fig:my_label}
\end{figure}

Next, we will find upper and lower bounds of the greatest eigenvalue $\lambda_1$ of graphs in $\tilde{R}$, just as we did for $B_{m,m-1}$ graphs.

\begin{lemmaN}
\label{lambda 1 eigenvalue bounds for tilde R}
For the graph $\tilde{R}_n$, $\frac{(n+1)(n+4)}{n+3} \leq \lambda_1 \leq n+2$
\end{lemmaN}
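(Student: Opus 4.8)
The plan is to use the standard two-sided bound on the largest adjacency eigenvalue of a connected graph: the spectral radius is bounded above by the maximum degree and bounded below by the average degree of any induced subgraph (in fact by the average degree of the whole graph, via the Rayleigh quotient evaluated at the all-ones vector). This is exactly the principle already invoked in the proof of Lemma \ref{bounds of lambda_1}, so I would cite that reasoning rather than re-derive it.

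For the upper bound, I would identify the maximum degree of $\tilde{R}_n$. Since $\tilde{R}_n \cong S(n+2,2)$ by Lemma \ref{R_n isomorphic S(n-3,2)}, I can work in whichever model is more convenient; in $\tilde R_n$ the single maximal $K^-_{n+3}$ induced subgraph has $n+3$ vertices, and the vertex $a_3$ (adjacent to the degree-2 vertex $a_1$, to the other $n+1$ vertices of the $K^-_{n+3}$, and to $a_1$) has degree $(n+2)+ \dots$ — I would carefully count and show $\Delta(\tilde R_n) = n+2$. (In the $S(n+2,2)$ picture: the suspension vertex is adjacent to all of $B_{n+2,2}$ except the two bridge vertices, i.e. to $n+2$ vertices; a non-bridge vertex of the $K_{n+2}$ part is adjacent to the other $n+1$ vertices of $K_{n+2}$ plus the suspension vertex, giving $n+2$; and one checks no vertex exceeds this.) Hence $\lambda_1 \le n+2$.

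For the lower bound, I would compute the average degree of $\tilde R_n$, which on $n+5$ vertices equals $2e(\tilde R_n)/(n+5)$. Using $\tilde R_n \cong S(n+2,2)$ I can read off the edge count: $e(B_{n+2,2}) = \binom{n+2}{2} + 1 + 1 = \binom{n+2}{2}+2$ (edges of $K_{n+2}$, the single edge of $K_2$, and the bridge), and the suspension adds $n+2$ further edges to the new vertex, so $e(S(n+2,2)) = \binom{n+2}{2} + n + 4$. Then $d_{\mathrm{avg}} = 2\bigl(\binom{n+2}{2}+n+4\bigr)/(n+5)$, which I would simplify and check equals $\frac{(n+1)(n+4)}{n+3}$; alternatively one can get a (possibly sharper) lower bound by averaging over a well-chosen induced subgraph, but matching the claimed fraction suggests the intended subgraph is precisely the one on $n+3$ vertices consisting of the $K^-_{n+3}$ together with... — I would reverse-engineer which induced subgraph has average degree exactly $\frac{(n+1)(n+4)}{n+3}$ and use that. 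Since $\frac{(n+1)(n+4)}{n+3} = n+2 - \frac{2}{n+3}$, a natural candidate is the $(n+3)$-vertex subgraph where two vertices have degree $n+1$ and the other $n+1$ have degree $n+2$ (total degree $2(n+1)+ (n+1)(n+2) = (n+1)(n+4)$), i.e. the suspension vertex together with the $K_{n+2}$ part minus one vertex, or the $K^-_{n+3}$ subgraph of $\tilde R_n$ itself.

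The main obstacle is purely bookkeeping: correctly counting the maximum degree and identifying the right induced subgraph (equivalently, the exact edge count) so that the lower bound comes out to the stated fraction rather than something slightly weaker. Once the combinatorial data $(\Delta, e, \text{and the chosen subgraph})$ is pinned down, both inequalities follow immediately from the eigenvalue–degree comparison already established in the proof of Lemma \ref{bounds of lambda_1}. I would therefore structure the proof as: (1) recall the bound $d_{\mathrm{avg}}(H) \le \lambda_1 \le \Delta$ for any induced subgraph $H$; (2) compute $\Delta(\tilde R_n) = n+2$ using the $K^-_{n+3}$ structure (or the $S(n+2,2)$ model); (3) exhibit the induced subgraph of average degree $\frac{(n+1)(n+4)}{n+3}$ and conclude.
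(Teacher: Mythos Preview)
Your approach is exactly the paper's: bound $\lambda_1$ above by the maximum degree $\Delta(\tilde R_n)=n+2$ and below by the average degree of the $K^-_{n+3}$ induced subgraph, whose total degree is $2(n+1)+(n+1)(n+2)=(n+1)(n+4)$ on $n+3$ vertices. One small correction to your detour: the whole-graph average degree is $\dfrac{n^2+5n+10}{n+5}$, which is \emph{not} equal to $\dfrac{(n+1)(n+4)}{n+3}$ (e.g.\ at $n=1$ they are $8/3$ versus $5/2$), so that check would fail; your instinct to pivot to the $K^-_{n+3}$ subgraph is the right one and is precisely what the paper does.
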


\begin{proof}
This follows from the fact that the greatest eigenvalue of a graph is bounded above by the greatest vertex degree and below by the average degree of any induced subgraph. It is not difficult to verify that the greatest vertex degree of $\tilde{R}_n$ is $n+2$ and the greatest average degree of any induced subgraph is on the $K^-_{i \geq 4}$ induced subgraph, where the average vertex degree is \[\frac{(n+1)(n+2)+2(n+1)}{n+3} = \frac{(n+1)(n+4)}{n+3}.\]
This establishes what we sought to prove.
\end{proof}

Let $\varphi \approx 1.618$ denote the golden ratio. In the following the theorem, we will draw a connection between this number and graphs in $\tilde{R}$.

\begin{theoremN}
\label{golden ratio eigenvalues for reseminant graphs}
$-\varphi$ and $\varphi^{-1}$ are eigenvalues of $\tilde{R}_n$ for all $n \in \mathbb{N}$.
\end{theoremN}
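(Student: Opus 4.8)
The plan is to exploit the isomorphism $\tilde R_n \cong S(n+2,2)$ from Lemma~\ref{R_n isomorphic S(n-3,2)}, which lets me work with a suspension graph whose $K_2$-side is as small as possible. In $S(n+2,2)$ there are two adjacent degree-$2$ vertices, call them $a_1$ and $a_2$ (the $K_2$ bridge), their respective neighbors of higher degree $a_3,a_4$ (which are \emph{not} adjacent to each other, since they are the bridge vertices of the two cliques and the apex is attached to neither), and the apex vertex $w$ joined to everything except $a_1,a_2$. The key observation is that $\{a_1,a_2,a_3,a_4,w\}$ carries a natural $5$-cycle structure: $a_1\!-\!a_2$, $a_2\!-\!a_4$, $a_4\!-\!w$, $w\!-\!a_3$, $a_3\!-\!a_1$, and these are \emph{the only} edges among those five vertices. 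So I would like to build eigenvectors supported essentially on this induced $C_5$.

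First I would recall that $C_5$ has eigenvalues $2\cos(2\pi k/5)$, and in particular $2\cos(4\pi/5) = -\varphi$ and $2\cos(2\pi/5) = \varphi^{-1}$, with eigenvectors that sum to zero around the cycle. The idea is to take such a $C_5$-eigenvector $(x_1,x_2,x_4,x_w,x_3)$ for eigenvalue $\mu \in \{-\varphi,\varphi^{-1}\}$, assign $x_i$ to vertex $a_i$ and $x_w$ to $w$, and extend by $0$ to all remaining vertices of $\tilde R_n$ (the $n$ extra clique vertices on the $K_{n+2}$ side, which are duplicates of $a_3$). I then need to check the eigenvalue equation $A x = \mu x$ at every vertex. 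At a duplicate vertex $v$ of $a_3$: its neighbors are $a_1$, $a_3$, $w$, and the other duplicates — but among $\{a_1,a_3,w,\text{duplicates}\}$ the only nonzero-labeled ones are $a_1$ and $w$, so the row sum is $x_1 + x_w$, and I need $x_1 + x_w = 0$. At $a_3$ itself the neighbors are $a_1$, $w$, and all duplicates (value $0$), so again the equation reads $x_1 + x_w = \mu x_3$. So the extension to $0$ forces the single linear constraint $x_1 + x_w = 0$ on the $C_5$-eigenvector, and simultaneously requires $\mu x_3 = 0$, i.e.\ $x_3 = 0$ as well (since $\mu \neq 0$).

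So the real content is: show that for $\mu = -\varphi$ and for $\mu = \varphi^{-1}$ there is a (nonzero) vector on the labeled $C_5$ satisfying $Ax = \mu x$ at $a_1,a_2,a_4,w$ \emph{together with} $x_3 = x_w + x_1 = 0$ and the $a_3$-equation. With $x_3 = 0$ the relevant subgraph on $\{a_1,a_2,a_4,w\}$ together with the pendant-type constraints becomes a path $a_1\!-\!a_2\!-\!a_4\!-\!w$ plus the requirement $x_w = -x_1$; writing the three equations at $a_2$ ($x_1 + x_4 = \mu x_2$), at $a_4$ ($x_2 + x_w = \mu x_4$), at $a_1$ ($x_2 + \text{(dup sums}=0) = \mu x_1$, using that $a_3$'s label is $0$), and at $w$ ($x_4 + \sum_{\text{duplicates}} 0 + x_3=0 = \mu x_w$), one gets a $4\times 4$ homogeneous system in $(x_1,x_2,x_4,x_w)$ whose determinant I expect to factor as $(\mu^2+\mu-1)(\mu^2-\mu-1)$ up to sign — precisely the minimal polynomials of $\varphi^{-1}$ and $-\varphi$ (roots of $\mu^2+\mu-1$) and of $\varphi$ and $-\varphi^{-1}$. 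I would then exhibit the explicit nullvector in each case. The main obstacle is bookkeeping: making sure the degree-$2$ vertices, the two ``bridge'' vertices $a_3,a_4$, and the apex are correctly placed in the $C_5$ and that every duplicate vertex's equation reduces to the same constraint $x_1 + x_w = 0$; once that is pinned down the algebra is a short finite check independent of $n$, which is exactly why the statement holds for all $n$. (The $n=0$ base case $C_5$ is immediate since $-\varphi,\varphi^{-1}$ are literally eigenvalues of $C_5$.)
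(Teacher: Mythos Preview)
Your overall strategy is sound and, at the level of content, matches the paper's: both produce a nonzero vector in $\ker(\mu I - A(\tilde R_n))$ supported on four of the five vertices of the induced $C_5$---precisely the four that are \emph{not} the repeatedly duplicated vertex---with the values at the two $C_5$-neighbours of the duplicated vertex summing to zero. The paper expresses this as an explicit linear dependence among four rows of $\mu I - A$ (equation~\eqref{linear dependence golden ratio}) and checks it by induction on $n$; your reformulation via an explicit eigenvector, reducing to the $P_4$ obtained by deleting the duplicated vertex (whose characteristic polynomial is indeed $(\mu^2+\mu-1)(\mu^2-\mu-1)$), is more direct and makes the independence from $n$ transparent, since every duplicate imposes one and the same linear constraint.

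That said, your vertex identifications are wrong, not merely imprecise. In $S(n+2,2)$ one of the two bridge vertices lies in $K_2$ and is therefore one of your degree-$2$ vertices $a_1,a_2$; hence $a_3,a_4$ cannot both be bridge vertices, and the apex is in fact one of $a_3,a_4$, not a sixth vertex $w$. More critically, the duplicated vertex of the $C_5$ is the one \emph{opposite} the edge $a_1a_2$---the unique $C_5$-vertex adjacent to neither degree-$2$ vertex---so in your cycle $a_1\!-\!a_2\!-\!a_4\!-\!w\!-\!a_3\!-\!a_1$ it is $w$, not $a_3$. (If $a_3$ were duplicated, its $C_5$-neighbour $a_1$ would be adjacent to every duplicate and could not have degree $2$.) Consequently the vanishing coordinate is $x_w$, the constraint reads $x_3+x_4=0$, and the $P_4$ is $a_3\!-\!a_1\!-\!a_2\!-\!a_4$. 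Note also that your derivation of ``$x_3=0$'' is circular: you dropped $x_3$ from the duplicate-vertex row sum before establishing that it vanishes; the honest computation at a duplicate gives $(\text{value at duplicated vertex})+(\text{sum at its two $C_5$-neighbours})=0$, which combined with the equation at the duplicated vertex itself forces the vanishing via $(\mu+1)x=0$. Finally, the detour through Lemma~\ref{R_n isomorphic S(n-3,2)} is unnecessary and is the source of the confusion: the argument is cleanest carried out directly in $\tilde R_n$, viewed as $C_5$ with one vertex duplicated $n$ times.
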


\begin{proof}
We will prove this by induction on the number of vertices, and we will consider the two eigenvalues separately. First, we will prove that $-\varphi$ is an eigenvalue of the adjacency matrices of these graphs in $\tilde{R}$.
\\
\\
First, we will prove inductively that if $v_i$ is the vertex on the induced 5-cycle that is repeatedly duplicated, then the rows of $A(\tilde{R}_n)$ corresponding to the other vertices in the induced 5-cycle are linearly dependent. Label the five vertices of the induced 5-cycle 1,2,3,4, and 0. They were written in this order because the ordering of these vertices corresponds to which row and column they represent in the adjacency matrix (ie. vertex 1 is the first row/column, vertex 2 is the second row/column, ..., but vertex 0 is the fifth row/column). If we let $[-\varphi I -A(\tilde{R}_n)]_j$ represent the $j$th row of $-\varphi I -A(\tilde{R}_n)$, then we assert
\begin{equation}
\label{linear dependence golden ratio}
[-\varphi I -A(\tilde{R}_n)]_{i-1} = [-\varphi I -A(\tilde{R}_n)]_{i+1} + \varphi[-\varphi I -A(\tilde{R}_n)]_{i-2}-\varphi[-\varphi I -A(\tilde{R}_n)]_{i+2}
\end{equation}
with $i$ in mod 5, where we are repeatedly duplicating the $i$th vertex of the induced 5-cycle. Thus, as an example, we find $[-\varphi I -A(\tilde{R}_n)]_0 = [-\varphi I -A(\tilde{R}_n)]_2 + \varphi[-\varphi I -A(\tilde{R}_n)]_4-\varphi[-\varphi I -A(\tilde{R}_n)]_3$ if we are duplicating vertex 1. As our base case, we show that this holds for $\tilde{R}_1$.
\\
\\
It is not difficult to verify that all the possible variations of Equation \ref{linear dependence golden ratio} are satisfied on the submatrix of $-\varphi I -A(\tilde{R}_n)$ for the vertices of the induced 5-cycle, which tells us that $-\varphi$ is an eigenvalue of $\tilde{R}_0$, or $C_5$. This submatrix is shown below:
\[
\left(\begin{matrix}
-\varphi & -1 & 0 & 0 & -1\\
-1 & -\varphi & -1 & 0 & 0\\
0 & -1 & -\varphi & -1 & 0\\
0 & 0 &-1 & -\varphi & -1\\
-1 & 0 & 0 & -1 & -\varphi
\end{matrix}\right).
\]
Thus, to show that the precise variation of Equation \ref{linear dependence golden ratio} holds for $\tilde{R}_1$, we just need to show that it holds for the sixth coordinate in the rows. If we are duplicating the $i$th vertex, then the $(i-1)$th and $(i+1)$th vertices (mod 5) are both -1 in $-\varphi I - A(\tilde{R}_1)$, while the other two vertices, the (i-2)th and (i+2)th (mod 5), are both 0. From this, we deduce that Equation \ref{linear dependence golden ratio} determines the linear dependence of some of the rows of $-\varphi I - A(\tilde{R}_1)$, so $\det{-\varphi I - A(\tilde{R}_1)} = 0$, which tells us that $-\varphi$ is an eigenvalue of $\tilde{R}_1.$ This establishes the base case.
\\
\\
Next, suppose Equation \ref{linear dependence golden ratio} holds for all $\tilde{R}_j$ for $j \leq n$, where the $i$th vertex on the induced 5-cycle is repeatedly duplicated. This tells us $\det{-\varphi I - A(\tilde{R}_j)}=0$, so $-\varphi$ is an eigenvalue of $A(\tilde{R}_j)$. Now, consider $\tilde{R}_{n+1}$. Because we are duplicating the same vertex that was duplicated for generating all of the $\tilde{R}_j$ before $\tilde{R}_{n+1}$, we know that the $(n+1)$th coordinate positions of the first five rows will be identical to the $l$th coordinate positions of these same rows for $6\leq l \leq n$. From this, we can deduce that Equation \ref{linear dependence golden ratio} will be unaffected by repeatedly duplicating the same vertex, so we find that it still holds for $\tilde{R}_{n+1}$. We can conclude that $-\varphi$ is an eigenvalue of $\tilde{R}_{n+1}$. It follows by induction that $-\varphi$ is an eigenvalue of every graph in $\tilde{\mathcal{R}}$.
\\
\\
We can use similar reasoning to show that $\varphi^{-1}$ is an eigenvalue of any graph in $\tilde{\mathcal{R}}$. The equation for establishing the linear dependence of the rows in $\varphi^{-1}I - A(\tilde{R}_n)$ is 
\begin{equation}
    [\varphi^{-1}I - A(\Gamma_n)]_{i-1} = [\varphi^{-1}I-A(\Gamma_n)]_{i+1} + \varphi^{-1}[\varphi^{-1}I-A(\Gamma_n)]_{i+2} - \varphi^{-1}[\varphi^{-1}I-A(\Gamma_n)]_{i-2}
\end{equation}
where, once again, $i$ is in mod 5, and the $i$th vertex is being duplicated. From this, we can deduce that $\varphi^{-1}$ is an eigenvalue of any graph in $\tilde{\mathcal{R}}$. Therefore, we have proved the theorem.
\end{proof}

From Lemma \ref{-1 eigenvalue reseminant graphs} and Theorem \ref{golden ratio eigenvalues for reseminant graphs}, we are left with all but three of the eigenvalues for any $\tilde{R}_n$. We will let $\theta_1, \theta_2,$ and $\theta_3$ be these remaining eigenvalues, such that $\theta_1 \geq \theta_2 \geq \theta_3$. By Lemma \ref{lambda 1 eigenvalue bounds for tilde R}, we see $\lambda_1 = \theta_1$. Just as we did in Lemma \ref{bounds of lambda_3 and lambda_4 B_m,m-1}, we can find bounds for the sum and product of $\theta_2$ and $\theta_3$, but first we will determine the characteristic polynomial of $\tilde{R}_n$.

\begin{theoremN}
\label{char polynomial for tilde R}
The characteristic polynomial of $\tilde{R}_n$ is
\begin{equation}
\label{characteristic polynomial R_n}
    \phi(A(\tilde{R}_n), x) = (x^3-(n+1)x^2-(n+3)x+(3n+2))(x+1)^n(x^2+x-1).
\end{equation}
\end{theoremN}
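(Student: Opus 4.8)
The plan is to assemble the characteristic polynomial from the pieces we already know and then pin down the remaining cubic factor by elementary-symmetric-function bookkeeping, exactly as in the proof of the characteristic polynomial for $B_{m,m-1}$. By Lemma \ref{-1 eigenvalue reseminant graphs}, the eigenvalue $-1$ has multiplicity $n$ in $\tilde{R}_n$ (since $|V(\tilde{R}_n)| = n+5$), so $(x+1)^n$ divides $\phi(A(\tilde{R}_n),x)$. By Theorem \ref{golden ratio eigenvalues for reseminant graphs}, $-\varphi$ and $\varphi^{-1}$ are eigenvalues; since $\varphi$ and $-\varphi^{-1}$ are the two roots of $x^2+x-1$ (note $\varphi = \frac{1+\sqrt5}{2}$, so $-\varphi$ and $\varphi^{-1} = \varphi - 1 = \frac{-1+\sqrt5}{2}$ are the roots of $x^2 - (\varphi^{-1}-\varphi)x - 1 = x^2 + x - 1$), the quadratic $(x^2+x-1)$ divides $\phi$. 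These two factors account for $n+2$ of the $n+5$ eigenvalues, leaving a monic cubic $q(x) = x^3 - (\sum \theta_i)x^2 + (\sum_{i<j}\theta_i\theta_j)x - \prod\theta_i$ to determine, where $\theta_1,\theta_2,\theta_3$ are the remaining eigenvalues.

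First I would compute $\sum_i \theta_i$ using $\operatorname{tr} A(\tilde{R}_n) = 0$: the $-1$'s contribute $-n$, and $-\varphi + \varphi^{-1} = -1$, so $\sum_i \theta_i = n+1$. Next I would compute $\sum_{i<j}\theta_i\theta_j$ from $\sum_{i<j}\lambda_i\lambda_j = -e(\tilde{R}_n)$ (the standard identity $\sum_{i<j}\lambda_i\lambda_j = \frac12((\sum\lambda_i)^2 - \sum\lambda_i^2)$ with $\sum\lambda_i = 0$ and $\sum\lambda_i^2 = \operatorname{tr}A^2 = 2e$). Here I can use Lemma \ref{R_n isomorphic S(n-3,2)}: $\tilde{R}_n \cong S(n+2,2)$, whose edge count is easy to read off (it is $B_{n+2,2}$ — with $\binom{n+2}{2} + 1 + 1 = \binom{n+2}{2}+2$ edges — plus the apex joined to the $n+2$ non-bridge vertices, giving $e(\tilde{R}_n) = \binom{n+2}{2} + 2 + (n+2) = \frac{n^2+5n+8}{2}$); alternatively one can count directly from the reseminant description. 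Then I expand $\sum_{i<j}\lambda_i\lambda_j$ in terms of the known eigenvalues and $\sum_{i<j}\theta_i\theta_j$ and solve, expecting to land on $\sum_{i<j}\theta_i\theta_j = -(n+3)$. Finally I would compute $\prod_i\theta_i$ from $\det A(\tilde{R}_n) = \prod_i\lambda_i$: using $\tilde{R}_n \cong S(n+2,2)$ and Theorem \ref{determinant C -> G} with $(m,n) \mapsto (n+2,2)$, $\det A(S(n+2,2)) = (-1)^{n+4}(8(n+2) - 5(n+4) + 6) = (-1)^n(3n+2)$; dividing out the contributions $(-1)^n$ from the $-1$'s and $(-\varphi)(\varphi^{-1}) = -1$ from the golden-ratio pair gives $\prod_i\theta_i = -(3n+2)$, hence the cubic is $x^3 - (n+1)x^2 - (n+3)x + (3n+2)$, matching Equation \ref{characteristic polynomial R_n}.

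The main obstacle is bookkeeping rather than conceptual: getting every sign and index shift right in the edge count and in the determinant substitution, and making sure the golden-ratio pair is correctly identified with the roots of $x^2+x-1$ (so that its contributions to the trace, to $\sum\lambda_i^2$, and to the product are $-1$, $3$, and $-1$ respectively — the $\sum\lambda_i^2$ contribution is $\varphi^2 + \varphi^{-2} = 3$ since $\varphi^2 = \varphi+1$ and $\varphi^{-2} = 2-\varphi$). One should also double-check the small cases $n=0$ (where $\tilde{R}_0 = C_5$, $\phi = x^3 - x^2 - 3x + 2$ times $(x^2+x-1)$, i.e. $(x-2)(x^2+x-1)^2$, the known spectrum of $C_5$) and $n=1$ as a sanity check. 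Since Lemmas \ref{-1 eigenvalue reseminant graphs} and \ref{R_n isomorphic S(n-3,2)}, Theorems \ref{determinant C -> G} and \ref{golden ratio eigenvalues for reseminant graphs} are all available, no genuinely new estimate is needed; the proof is a purely algebraic consolidation of these inputs.
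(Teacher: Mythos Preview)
Your proposal is correct and follows essentially the same approach as the paper: factor out $(x+1)^n$ and $(x^2+x-1)$ using Lemma \ref{-1 eigenvalue reseminant graphs} and Theorem \ref{golden ratio eigenvalues for reseminant graphs}, then recover the cubic via the trace, the edge count (for $\sum_{i<j}\lambda_i\lambda_j$), and the determinant from Theorem \ref{determinant C -> G} via $\tilde{R}_n\cong S(n+2,2)$. The only slip is a harmless arithmetic typo in your edge count: $\binom{n+2}{2}+2+(n+2)=\tfrac{n^2+5n+10}{2}$, not $\tfrac{n^2+5n+8}{2}$, which matches the paper's $2+\tfrac{(n+2)(n+3)}{2}$ and does indeed yield $\sum_{i<j}\theta_i\theta_j=-(n+3)$.
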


\begin{proof}
By Lemmas \ref{-1 eigenvalue reseminant graphs} and \ref{golden ratio eigenvalues for reseminant graphs}, we know that $(x+1)^{n-5}$ and $x^2+x-1$ divide $\phi(A(\tilde{R}_n), x)$. Thus, we know that there are only the three remaining eigenvalues $\theta_1, \theta_2,$ and $\theta_3$. Just as we did in the proof of Theorem \ref{characteristic polynomial equation for B_m,m-1}, we only need to calculate $\sum_{i=1}^3 \theta_i$, $\sum_{1 \leq i < j \leq 3} \theta_i \theta_j$, and $\prod_{i=1}^3 \theta_i$, and then we can construct the polynomial $\prod_{i=1}^3 (x-\theta_i)$.
\\
\\
First, we will show $\sum_{i=1}^3 \theta_i = n+1$. We know $\sum_{i=1}^n \lambda_i = 0$, for this is just the trace of $A(\tilde{R}_n)$, and
\[
\sum_{i=1}^n \lambda_i = \sum_{i=1}^3 \theta_i -n-\varphi + \varphi^{-1} = \sum_{i=1}^3\theta_i - (n+1) = 0,
\]
which establishes $\sum_{i=1}^3 \theta_i = n+1$. Next, we need to show $\sum_{1\leq i < j \leq 3}\theta_i \theta_j = -(n+3)$. Recall from the proof of Theorem \ref{characteristic polynomial equation for B_m,m-1} that $\sum_{1\leq i < j \leq n}\lambda_i \lambda_j = -e(\Gamma)$, where $e(\Gamma)$ is the number of edges in the graph $\Gamma$ being studied. Hence, for our case at hand, this sum is $-e(\tilde{R}_n)$, which is not difficult to determine. Any graph in $\tilde{\mathcal{R}}$ has three edges distinct from the edges in $K^-_{n+3}$, so $e(\tilde{R}_n) = 3+\frac{1}{2}(n+2)(n+3)-1 = 2 + \frac{1}{2}(n+2)(n+3)$. Thus, we have found $\sum_{1\leq i < j \leq n}\lambda_i \lambda_j = -(2 + \frac{1}{2}(n+2)(n+3))$. Next, we find 
\[
\sum_{1\leq i < j \leq n}\lambda_i \lambda_j = \sum_{1\leq i < j \leq 3}\theta_i \theta_j  + (-\varphi+\varphi^{-1}-n)\sum_{i=1}^3 \lambda_3\]\[+n(\varphi-\varphi^{-1})-1+ \frac{1}{2}(n)(n-1) = \sum_{1\leq i < j \leq 3}\theta_i \theta_j  -(n+1)^2\]\[+(n-1)+ \frac{1}{2}(n)(n-1) =  -(2 + \frac{1}{2}(n+2)(n+3)).
\]
When we solve this for $\sum_{1\leq i < j \leq 3}\lambda_i \lambda_j$, we find $\sum_{1\leq i < j \leq 3}\lambda_i \lambda_j = -(n+3)$. Now, we need to determine $\prod_{i=1}^3 \theta_i$. By Lemma \ref{R_n isomorphic S(n-3,2)}, we know that $\tilde{R}_n$ is isomorphic to $S(n+2,2)$, so we can easily determine the determinate of its adjacency matrix by Theorem \ref{determinant C -> G}. Thus, we know $\det{A(\tilde{R}_n)} = \prod_{i=1}^n \lambda_i = (-1)^{n+4}(8(n+2)-5(n+4)+6) = (-1)^n(3n+2)$. Additionally,
\[
\prod_{i=1}^n \lambda_i = \prod_{i=1}^3 \theta_i(-1)^n(-\varphi)(\varphi^{-1}) = \prod_{i=1}^3 \theta_i(-1)^{n+1} = (-1)^n(3n+2),
\]
so we find $\prod_{i=1}^3 \theta_i = -(3n+2)$. Putting all of these results back together, we find $\prod_{i=1}^3 (x-\theta_i) = (x^3-(n+1)x^2-(n+3)x+(3n+2))$, so Equation \ref{characteristic polynomial R_n}, which is essentially $\phi(A(\tilde{R}_n),x) = \prod_{i=1}^n (x-\lambda_i)$, is the $n$th degree polynomial with the $n$ eigenvalues of $A(\tilde{R}_n)$ as its roots. Hence, it is the characteristic polynomial and our proof is complete.

\end{proof}

Now, we are ready to find upper and lower bounds for the sum and product of $\theta_2$ and $\theta_3$. 

\begin{lemmaN}\label{theta 2 and 3 inequalities tilde R}
The eigenvalues $\theta_2$ and $\theta_3$ of $\tilde{R}_n$ satisfy the following inequalities:
\begin{enumerate}
    \item $-1 \leq \theta_2 + \theta_3 \leq -\frac{n+1}{n+3}$
    \item $-\frac{(2+3n)(n+3)}{(n+1)(n+4)} \leq \theta_2 \theta_3 \leq -\frac{2+3n}{n+2}$.
\end{enumerate}
\end{lemmaN}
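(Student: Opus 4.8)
The plan is to extract the elementary symmetric functions of $\theta_1,\theta_2,\theta_3$ from the cubic factor $x^3-(n+1)x^2-(n+3)x+(3n+2)$ of the characteristic polynomial found in Theorem~\ref{char polynomial for tilde R}, and then eliminate $\theta_1$ using the bounds $\frac{(n+1)(n+4)}{n+3}\le\theta_1=\lambda_1\le n+2$ from Lemma~\ref{lambda 1 eigenvalue bounds for tilde R}. No new structural input is needed; the statement is just Vieta plus those bounds.

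First I would apply Vieta's formulas to that cubic to get
\[
\theta_1+\theta_2+\theta_3=n+1,\qquad \theta_1\theta_2+\theta_1\theta_3+\theta_2\theta_3=-(n+3),\qquad \theta_1\theta_2\theta_3=-(3n+2).
\]
For (i), rewrite the first relation as $\theta_2+\theta_3=(n+1)-\theta_1$. Substituting $\theta_1\le n+2$ gives the lower bound $\theta_2+\theta_3\ge(n+1)-(n+2)=-1$, and substituting $\theta_1\ge\frac{(n+1)(n+4)}{n+3}$ gives
\[
\theta_2+\theta_3\le (n+1)-\frac{(n+1)(n+4)}{n+3}=-\frac{n+1}{n+3},
\]
which is exactly (i).

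For (ii), I would divide the product relation by $\theta_1$, which is valid and sign-preserving because $\theta_1=\lambda_1\ge\frac{(n+1)(n+4)}{n+3}>0$, obtaining $\theta_2\theta_3=-\dfrac{3n+2}{\theta_1}$. Since the numerator $-(3n+2)$ is negative, the map $x\mapsto-\dfrac{3n+2}{x}$ is increasing on $x>0$, so the bounds $\frac{(n+1)(n+4)}{n+3}\le\theta_1\le n+2$ pass through directly to
\[
-\frac{(3n+2)(n+3)}{(n+1)(n+4)}\le \theta_2\theta_3\le -\frac{3n+2}{n+2},
\]
which is (ii). The only thing that requires care is tracking the direction of the inequalities when dividing by $\theta_1$ with a negative numerator, so I expect no real obstacle. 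As a sanity check one may note that $\theta_2\theta_3<0$, consistent with the constant term $-(3n+2)<0$ of the cubic and $\theta_1>0$, forcing $\theta_2>0>\theta_3$; this also matches the eventual ordering statement for the spectrum of $\tilde R_n$.
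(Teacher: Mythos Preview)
Your proof is correct and follows essentially the same route as the paper: both use the bounds on $\theta_1=\lambda_1$ from Lemma~\ref{lambda 1 eigenvalue bounds for tilde R} together with the relations $\sum_{i=1}^3\theta_i=n+1$ and $\prod_{i=1}^3\theta_i=-(3n+2)$ (which you read off via Vieta from the cubic in Theorem~\ref{char polynomial for tilde R}, and the paper obtains directly in the proof of that theorem) to eliminate $\theta_1$. Your write-up simply makes the one-line argument in the paper explicit.
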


\begin{proof}
This follows from Lemma \ref{lambda 1 eigenvalue bounds for tilde R} and the equations for $\sum_{i=1}^3 \theta_i$ and $\prod_{i=1}^3 \theta_i$, which were found in the proof of Theorem \ref{char polynomial for tilde R}.
\end{proof}
In the following theorem, we establish the graph spectrum of $\tilde{R}_n$.

\begin{theoremN}
If $n>0$, $\theta_1 > \theta_2> \varphi^{-1} > -1> -\varphi > \theta_3 $, and \[Spec(\tilde{R}_n) = \left(\begin{matrix}
\theta_1 & \theta_2 & \varphi^{-1} & -1 & -\varphi & \theta_3\\
1&1&1& n-5 &1&1
\end{matrix}\right).\] 
If $n=0$, $\theta_3 = -\varphi$, $\theta_2 = \varphi^{-1}$, and $\theta_1 = 2$, so 
\[
Spec(\tilde{R}_0) = Spec(C_5) = \left(\begin{matrix}
2 & \varphi^{-1}  & -\varphi\\
1&2& 2
\end{matrix}\right).
\]
\end{theoremN}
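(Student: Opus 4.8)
The plan is to read the eigenvalues straight off the factorization already established in Theorem~\ref{char polynomial for tilde R} and then pin down the location of the three ``cubic'' eigenvalues $\theta_1\ge\theta_2\ge\theta_3$ relative to $\varphi^{-1},-1,-\varphi$ by a sign analysis of the cubic factor $p(x):=x^3-(n+1)x^2-(n+3)x+(3n+2)$. By Theorem~\ref{char polynomial for tilde R} the characteristic polynomial of $\tilde{R}_n$ is $p(x)\,(x+1)^n\,(x^2+x-1)$, and $x^2+x-1$ has exactly the two roots $\varphi^{-1}=\tfrac{-1+\sqrt{5}}{2}$ and $-\varphi=\tfrac{-1-\sqrt{5}}{2}$. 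So the theorem reduces to two things: (a) for $n>0$, $p$ vanishes at none of $-1,\varphi^{-1},-\varphi$, so that the multiplicities in the displayed spectrum are exactly $1,1,1$ for $\theta_1,\varphi^{-1},-\varphi$, the multiplicity of $-1$ being the one furnished by Lemma~\ref{-1 eigenvalue reseminant graphs}, and $1$ for $\theta_2,\theta_3$; and (b) the roots of $p$ satisfy $\theta_1>\theta_2>\varphi^{-1}$ and $\theta_3<-\varphi$.

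The one genuine computation is the evaluation of $p$ at the roots of $x^2+x-1$. If $x$ satisfies $x^2=1-x$, then $x^3=2x-1$, and substituting these into $p$ and collecting terms makes the coefficient of $x$ vanish, leaving $p(x)=2n$. Hence $p(\varphi^{-1})=p(-\varphi)=2n$, while a direct substitution gives $p(-1)=3(n+1)$. For $n>0$ all three of these are strictly positive; in particular none of $-1,\varphi^{-1},-\varphi$ is a root of $p$, which together with the factorization of the characteristic polynomial settles every multiplicity in the displayed spectrum.

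For the ordering, note that $p$ is cubic with positive leading coefficient and, as a factor of the characteristic polynomial of the symmetric matrix $A(\tilde{R}_n)$, has three real roots $\theta_3\le\theta_2\le\theta_1$; its sign across the four intervals these roots determine is $-,+,-,+$. Lemma~\ref{lambda 1 eigenvalue bounds for tilde R} gives $\theta_1=\lambda_1\ge\frac{(n+1)(n+4)}{n+3}>2>\varphi^{-1}>-1$, and since $\tilde{R}_n$ is connected the largest eigenvalue is simple (Lemma~\ref{PF-lemma}), so $\theta_1>\theta_2$. Since $p(-\varphi)=2n>0$ while $p(x)\to-\infty$ as $x\to-\infty$, the least root satisfies $\theta_3<-\varphi$. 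Since $p(\varphi^{-1})=2n>0$ and $p(-1)=3(n+1)>0$, and both $\varphi^{-1}$ and $-1$ lie strictly below $\theta_1$ and strictly above $\theta_3$ (which is below $-\varphi<-1$), the points $\varphi^{-1}$ and $-1$ must lie in the interval $(\theta_3,\theta_2)$, the one on which $p$ is positive; hence $\theta_2>\varphi^{-1}>-1>-\varphi>\theta_3$, with strict inequalities because $p$ does not vanish at $\varphi^{-1}$ or $-\varphi$. This gives the spectrum for $n>0$.

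Finally, $n=0$ is degenerate precisely because $p(\varphi^{-1})=p(-\varphi)=0$ there: for $n=0$ one has $p(x)=x^3-x^2-3x+2=(x-2)(x^2+x-1)$, so $\theta_1=2$, $\theta_2=\varphi^{-1}$, $\theta_3=-\varphi$, and the characteristic polynomial becomes $(x-2)(x^2+x-1)^2$, which is exactly $Spec(\tilde{R}_0)=Spec(C_5)$ as stated. I expect the only real obstacle to be the clean identity $p(\varphi^{-1})=p(-\varphi)=2n$; once that is in hand the rest is bookkeeping about a cubic together with Lemmas~\ref{lambda 1 eigenvalue bounds for tilde R} and~\ref{PF-lemma}. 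The one spot needing a touch of care is the strictness $\theta_1>\theta_2$, which relies on the largest eigenvalue of a connected graph being simple rather than on Lemma~\ref{PF-lemma} exactly as phrased.
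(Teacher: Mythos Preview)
Your proof is correct and takes a genuinely different route from the paper's. The paper argues via Lemma~\ref{theta 2 and 3 inequalities tilde R} (bounds on $\theta_2+\theta_3$ and $\theta_2\theta_3$ coming from Vieta and the $\lambda_1$--bounds) and then runs a case analysis to place $\theta_2,\theta_3$ relative to $\varphi^{-1}$ and $-\varphi$, finishing with a minimal--polynomial argument over $\mathbb{Z}[x]$ to exclude $\theta_2=\varphi^{-1}$. You bypass all of that by evaluating the cubic $p$ directly at the three distinguished points, obtaining the clean identities $p(\varphi^{-1})=p(-\varphi)=2n$ and $p(-1)=3(n+1)$, and then reading off the root locations from the sign of $p$. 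This is shorter and more elementary: Lemma~\ref{theta 2 and 3 inequalities tilde R} is never needed, and the strict inequalities $\theta_2\neq\varphi^{-1}$, $\theta_3\neq-\varphi$ fall out immediately from $p(\varphi^{-1}),p(-\varphi)\neq 0$ rather than requiring the divisibility argument. The paper's approach has the minor advantage of isolating the sum/product bounds as a standalone lemma, but for the present theorem your method is the cleaner one.

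One small point worth tightening: your sign--pattern sentence ``$-,+,-,+$'' presupposes that all three roots of $p$ are distinct, but you only explicitly rule out $\theta_1=\theta_2$ (via simplicity of the Perron root). The case $\theta_2=\theta_3$ is easily dispatched---if it held, then $p(x)=(x-\theta_1)(x-\theta_2)^2\le 0$ for all $x<\theta_1$, contradicting $p(\varphi^{-1})=2n>0$---and you may want to say this in one line. Your closing remark about Lemma~\ref{PF-lemma} is apt: both your argument and the paper's silently use the irreducible (connected--graph) form of Perron--Frobenius rather than the strictly--positive version actually stated.
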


\begin{proof}
First, suppose $n=0$. In this case, the $\theta_i$ are roots of the polynomial $x^3-x^2-3x+2$, which factors into $(x-2)(x^2+x-1)$, so we find $\theta_1 =2, \theta_2 = \varphi^{-1}$, and $\theta_3 = -\varphi$. Along with Theorem \ref{char polynomial for tilde R}, this establishes the second part of the theorem.
\\
\\
Now, we will let $n>0$. By Theorem \ref{char polynomial for tilde R}, we note a priori that $\varphi^{-1}$, and $-\varphi$ are eigenvalues in the spectrum with multiplicity $\geq 1$ and $-1$ has multiplicity $\geq n-5$. By Lemma \ref{lambda 1 eigenvalue bounds for tilde R}, we see that $\theta_1$ increases along with $n$, which will affect the possible values of $\theta_2$ and $\theta_3$. Also, we deduce in general that $\theta_1 > \varphi^{-1}$, and we know that $\theta_1$ must be the unique largest eigenvalue by Lemma \ref{PF-lemma}. We know $\prod_{i=1}^3 \theta_i = -(3n+2) < 0$, so $\theta_2$ and $\theta_3$ cannot both be negative. Thus, $\theta_2>0$ and $\theta_3<0$ for all $n$. Next, assume there is some $n$ such that $\theta_2 < \varphi^{-1}$. By Lemma \ref{theta 2 and 3 inequalities tilde R} (i), we then see that $\theta_3> -\varphi$. However, then we find $-1<\theta_2 \theta_3<0$. However, this contradicts the second part of Lemma \ref{theta 2 and 3 inequalities tilde R} because $-\frac{2+3n}{n+2}<-1$ for all $n>0$, so we know that there is no $n>0$ such that $\theta_2 < \varphi^{-1}$. Hence, $\theta_1 > \theta_2 > \varphi^{-1}$ for all $n>0$. Now, we just need to show that there is no $n>0$ such that $0>\theta_3>-\varphi$. Assume the contrary. Once again, by Lemma \ref{theta 2 and 3 inequalities tilde R}, this requires that $\theta_2<\varphi^{-1}$, which we just showed is not the case. Thus, we have another contradiction. Assume $\theta_2 = \varphi^{-1}$ for some $n>0$. The characteristic polynomial must have all integer coefficients, and the minimal polynomial of $\varphi^{-1}$ in the polynomial ring $\mathbb{Z}[x]$ is $x^2+x-1$, so we deduce $x^2+x-1$ must divide $x^3+(4-n)x^2+(2-n)x+(3n-13)$. Hence, we also know $\theta_3 = -\varphi$, and there must be some $k \in \mathbb{Z}$ such that $(x^2+x-1)(x-k) = x^3-(n+1)x^2-(n+3)x+(3n+2)$. This simplifies to the system of equations $-k+n=-2$ and $k-3n=2$, which has the unique solution $(n,k)=(0,2)$. This contradicts our assumption $n>0$. We get the same result if we assume first that $\theta_3 = -\varphi$, so we know that the $\theta_i$ are all distinct from each other and $\varphi^{-1}, -\varphi,$ and $-1$, so we conclude that the theorem is proved.
\end{proof}

The natural question to ask after finding the spectrum of a graph is: \textit{is this graph determined by its spectrum?} In other words, does the spectrum provide a characterization, or are there other non-isomorphic graphs that share the same spectrum. Many articles have considered this question for different families of graphs (See \cite{almost_complete_spectra}, \cite{DS_graphs}, \cite{pineapple_spectrum}, \cite{dumbell_spectra}, \cite{isolated_vertex_spectrum}). Given the high multiplicities of the -1 eigenvalues for both $B_{m,m-1}$ and $\tilde{R}_n$, it may be helpful to approach this problem by considering structural equivalence in graphs and determining which graph structures can be candidates for cospectral graphs with $B_{m,m-1}$ and $\tilde{R}_n$. Structural equivalence is closely related with the multiplicity of the -1 eigenvalue; this problem was studied in \cite{seq_paper}.

\section{Acknowledgements}

This research was conducted at Texas State University under NSF-REU grant DMS-1757233 during the summer of 2020. The first four authors thank NSF and the fifth author gratefully acknowledges the financial support from the Office of Undergraduate Research at Washington University in St. Louis. The authors thank Texas State University for running the REU online during this difficult period of social distancing and providing a welcoming and supportive work environment. In particular, Dr. Yong Yang, the director of the REU program, is recognized for conducting an inspired and successful research program. The first, second, third and fifth authors also thank their mentor, the fourth author, for his invaluable advice and guidance throughout this project. The second author is largely responsible for the results of this paper, with guidance from the fourth author. The other authors worked on other projects during the summer REU program. 
\printbibliography
\end{document}